\providecommand{\U}[1]{\protect\rule{.1in}{.1in}}
\providecommand{\U}[1]{\protect\rule{.1in}{.1in}}
\providecommand{\U}[1]{\protect\rule{.1in}{.1in}}
\providecommand{\U}[1]{\protect\rule{.1in}{.1in}}
\providecommand{\U}[1]{\protect\rule{.1in}{.1in}}
\providecommand{\U}[1]{\protect\rule{.1in}{.1in}}
\providecommand{\U}[1]{\protect\rule{.1in}{.1in}}
\providecommand{\U}[1]{\protect\rule{.1in}{.1in}}
\providecommand{\U}[1]{\protect\rule{.1in}{.1in}}
\providecommand{\U}[1]{\protect\rule{.1in}{.1in}}
\providecommand{\U}[1]{\protect\rule{.1in}{.1in}}
\providecommand{\U}[1]{\protect\rule{.1in}{.1in}}
\providecommand{\U}[1]{\protect\rule{.1in}{.1in}}
\providecommand{\U}[1]{\protect\rule{.1in}{.1in}}
\providecommand{\U}[1]{\protect\rule{.1in}{.1in}}
\providecommand{\U}[1]{\protect\rule{.1in}{.1in}}
\providecommand{\U}[1]{\protect\rule{.1in}{.1in}}
\providecommand{\U}[1]{\protect\rule{.1in}{.1in}}
\providecommand{\U}[1]{\protect\rule{.1in}{.1in}}
\providecommand{\U}[1]{\protect\rule{.1in}{.1in}}
\providecommand{\U}[1]{\protect\rule{.1in}{.1in}}
\providecommand{\U}[1]{\protect\rule{.1in}{.1in}}
\providecommand{\U}[1]{\protect\rule{.1in}{.1in}}
\providecommand{\U}[1]{\protect\rule{.1in}{.1in}}
\providecommand{\U}[1]{\protect\rule{.1in}{.1in}}
\providecommand{\U}[1]{\protect\rule{.1in}{.1in}}
\providecommand{\U}[1]{\protect\rule{.1in}{.1in}}
\providecommand{\U}[1]{\protect\rule{.1in}{.1in}}
\newtheorem{theorem}{Theorem}
{}
\newtheorem{lemma}{Lemma}
{}
\newtheorem{proposition}{Proposition}
\newenvironment{proof}[1][Proof]{\textbf{#1.} }{\ \rule{0.5em}{0.5em}}
\begin{document}

\title{On the Basis Property of the Root Functions of Sturm-Liouville Operators with
General Regular Boundary Conditions.}
\author{Cemile Nur and O. A. Veliev\\{\small Department of Mathematics, Dogus University, Kadik\"{o}y, \ Istanbul,
Turkey.}\\\ {\small E-mail: cnur@dogus.edu.tr;} {\small oveliev@dogus.edu.tr}}
\date{}
\maketitle

\begin{abstract}
We obtain the asymptotic formulas for the eigenvalues and eigenfunctions of
the Sturm-Liouville operators with general regular boundary conditions. Using
these formulas, we find sufficient conditions on the potential $q$ such that
the root functions of these operators do not form a Riesz basis.

Key Words: Asymptotic formulas, Regular boundary conditions. Riesz basis.

AMS Mathematics Subject Classification: 34L05, 34L20.

\end{abstract}

\section{Introduction and Preliminary Facts}

In this paper we consider\ the operators generated in $L_{2}[0,1]$ by the
differential expression
\begin{equation}
l\left(  y\right)  =-y^{\prime\prime}+q(x)y
\end{equation}
and regular boundary conditions that are not strongly regular. Note that, if
the boundary conditions are strongly regular, then the root functions
(eigenfunctions and associated functions) form a Riesz basis (this result was
proved independently in [6], [9] and [17]). In the case when an operator is
regular but not strongly regular, the root functions generally do not form
even usual basis. However, Shkalikov [20, 21] proved that they can be combined
in pairs, so that the corresponding 2-dimensional subspaces form a Riesz basis
of subspaces.

To describe the results of this paper and preliminary results let us classify
all regular boundary conditions that are not strongly regular. One can readily
see from pages 62-63 of [18] that all regular boundary conditions that are not
strongly regular can be written in the form
\begin{align}
a_{1}y_{0}^{\prime}+b_{1}y_{1}^{\prime}+a_{0}y_{0}+b_{0}y_{1}  &
=0,\nonumber\\
c_{0}y_{0}+d_{0}y_{1}  &  =0,
\end{align}
if
\begin{equation}
b_{1}c_{0}+a_{1}d_{0}\neq0.
\end{equation}
and $\theta_{0}^{2}-4\theta_{1}\theta_{-1}=0,$ where , $a_{i},b_{i}%
,c_{0},d_{0}$, $i=0,1$, are complex numbers and $\theta_{0},\theta_{1}$ and
$\theta_{-1}$ are defined \ by
\begin{equation}
\frac{\theta_{-1}}{s}+\theta_{0}+\theta_{1}s=w_{1}\left(  b_{1}c_{0}%
+a_{1}d_{0}\right)  \left(  s+\frac{1}{s}\right)  +2\left(  a_{1}c_{0}%
+b_{1}d_{0}\right)  w_{1}%
\end{equation}
\ in p.63 of [18]. Thus, by (4), $\theta_{-1}=\theta_{1}=w_{1}\left(
b_{1}c_{0}+a_{1}d_{0}\right)  ,$ $\theta_{0}=2\left(  a_{1}c_{0}+b_{1}%
d_{0}\right)  w_{1},$ and hence the equality $\theta_{0}^{2}-4\theta_{1}%
\theta_{-1}=0$ implies that
\[
4\omega_{1}^{2}\left[  \left(  a_{1}c_{0}+b_{1}d_{0}\right)  ^{2}-\left(
b_{1}c_{0}+a_{1}d_{0}\right)  ^{2}\right]  =0,
\]
that is, $\left(  a_{1}^{2}-b_{1}^{2}\right)  \left(  c_{0}^{2}-d_{0}%
^{2}\right)  =0$ which means that at least one of the following conditions
holds:
\[
a_{1}=\pm b_{1},\text{ }c_{0}=\pm d_{0}.
\]

First suppose that $a_{1}=\left(  -1\right)  ^{\sigma}b_{1},$ where
$\sigma=0,1.$ This with (3) implies that both $a_{1}$ and $b_{1}$ are not zero
and at least one of $c_{0}$ and $d_{0}$ is not zero. If $c_{0}\neq0$, then (2)
can be written in the form
\begin{align}
y_{0}^{\prime}+\left(  -1\right)  ^{\sigma}y_{1}^{\prime}+\alpha_{1}y_{1}  &
=0,\nonumber\\
y_{0}+\alpha_{2}y_{1}  &  =0,
\end{align}
where $\alpha_{1}=\dfrac{b_{0}}{a_{1}}-\dfrac{a_{0}d_{0}}{a_{1}c_{0}}$,
$\alpha_{2}=\dfrac{d_{0}}{c_{0}}$, $a_{1},c_{0}\neq0$ and $\alpha_{2}%
\neq-\left(  -1\right)  ^{\sigma}$ due to (3).

Similarly, if $d_{0}\neq0$, then (2) can be transformed to
\begin{align}
y_{0}^{\prime}+\left(  -1\right)  ^{\sigma}y_{1}^{\prime}+\alpha_{3}y_{0}  &
=0,\nonumber\\
\alpha_{4}y_{0}+y_{1}  &  =0,
\end{align}
where $\alpha_{3}=\dfrac{a_{0}}{a_{1}}-\dfrac{b_{0}c_{0}}{a_{1}d_{0}}$,
$\alpha_{4}=\dfrac{c_{0}}{d_{0}}$, $a_{1},d_{0}\neq0$ and by (3) $\alpha
_{4}\neq-\left(  -1\right)  ^{\sigma}.$

Now suppose that $d_{0}=\left(  -1\right)  ^{\sigma}c_{0}.$ Arguing as in the
reductions of (5) and (6) we arrive at the boundary conditions%
\begin{align}
y_{0}^{\prime}+\beta_{1}y_{1}^{\prime}+\beta_{2}y_{1}  &  =0,\nonumber\\
y_{0}+\left(  -1\right)  ^{\sigma}y_{1}  &  =0,
\end{align}
where $\beta_{1}=\dfrac{b_{1}}{a_{1}},$ $\beta_{2}=\left(  \dfrac{b_{0}}%
{a_{1}}\mp\dfrac{a_{0}}{a_{1}}\right)  ,$ $a_{1},c_{0}\neq0$ and%
\begin{equation}
\beta_{1}\neq-\left(  -1\right)  ^{\sigma}%
\end{equation}
and the boundary conditions
\begin{align}
\beta_{3}y_{0}^{\prime}+y_{1}^{\prime}+\beta_{4}y_{1}  &  =0,\nonumber\\
y_{0}+\left(  -1\right)  ^{\sigma}y_{1}  &  =0,
\end{align}
where $\beta_{3}=\dfrac{a_{1}}{b_{1}},$ $\beta_{4}=\dfrac{b_{0}}{b_{1}}%
\mp\dfrac{a_{0}}{b_{1}},$ $b_{1},c_{0}\neq0$ and
\begin{equation}
\beta_{3}\neq-\left(  -1\right)  ^{\sigma}%
\end{equation}
for $\sigma=0,1.$

One can verify in the standard way that, the boundary conditions (5) and (6),
are the adjoint boundary conditions to (9) and (7), respectively, where
$\alpha_{3}=-\left(  -1\right)  ^{\sigma}\overline{\beta_{2}}$, $\alpha
_{4}=\overline{\beta_{1}}$ and $\alpha_{1}=\left(  -1\right)  ^{\sigma
}\overline{\beta_{4}}$, $\alpha_{2}=\overline{\beta_{3}}$.

Thus to consider all regular boundary conditions that are not strongly regular
it is enough to investigate the boundary conditions (7) and (9). Note that
these boundary conditions depend on two parameters. Let us describe the
special cases that were investigated.

Case $\left(  a\right)  $ The cases $\beta_{2},\beta_{4}=0$ , $\beta_{1}%
,\beta_{3}=\left(  -1\right)  ^{\sigma}$ in (7), (9) for $\sigma=1$ and
$\sigma=0$ coincide with the periodic and antiperiodic boundary
conditions\ respectively. These boundary conditions are the ones more commonly
studied. Therefore, let us briefly describe some historical developments
related to the Riesz basis property of the root functions of the periodic and
antiperiodic boundary value problems. First results were obtained by Kerimov
and Mamedov [8]. They established that, if
\[
q\in C^{4}[0,1],\ q(1)\neq q(0),
\]
then the root functions of the operator $L(q)$ form a Riesz basis in
$L_{2}[0,1],$ where $L(q)$ denotes the operator generated by (1) and the
periodic boundary conditions.

The first result in terms of the Fourier coefficients of the potential $q$ was
obtained by Dernek and Veliev [1]. Makin [11] extended this result for the
larger class of functions. Shkalilov and Veliev obtained in [22] more general
results which cover all results about periodic and antiperiodic boundary
conditions discussed above.

The other interesting results about periodic and antiperiodic boundary
conditions were obtained in [2-5, 7, 14-16, 23-25].

Case $\left(  b\right)  $ The cases $\beta_{2},\beta_{4}\neq0$ and $\beta
_{1},\beta_{3}=\left(  -1\right)  ^{\sigma}$ are investigated in [12, 13] and
it was proved that the system of the root functions of the Sturm-Liouville
operator corresponding to this case is a Riesz basis in $L_{2}\left(
0,1\right)  $ (see Theorem 1 of [12,13]).

Case $\left(  c\right)  $ The cases $\beta_{2},\beta_{4}=0$ and $\beta
_{1},\beta_{3}\neq\left(  -1\right)  ^{\sigma}$ are investigated in [12, 13]
and [19].

We call the boundary conditions (7) and (9) for $\beta_{2},\beta_{4}\neq0$ and
$\beta_{1},\beta_{3}\neq\left(  -1\right)  ^{\sigma}$ which are different from
the special cases $\left(  a\right)  ,$ $\left(  b\right)  $ and $\left(
c\right)  $ as general regular boundary conditions that are not strongly
regular. Note that in any case $\beta_{1},\beta_{3}\neq-\left(  -1\right)
^{\sigma}$ by (8) and (10). For the case $\left(  c\right)  $ and general
boundary conditions Makin [12, 13] proved that the systems of the root
functions of the Sturm-Liouville operators corresponding to these cases are
Riesz bases in $L_{2}\left(  0,1\right)  $ if and only if all large
eigenvalues are multiple. Note that this result is not effective, since the
conditions are given in implicit form and can not be verified for concrete
potentials. In [19] we find explicit conditions on potential such that the
system of the root functions of the Sturm-Liouville operator corresponding to
the case $\left(  c\right)  $ does not form a Reisz basis. Namely we proved
that if
\begin{equation}
\lim_{n\rightarrow\infty}\dfrac{\ln\left\vert n\right\vert }{ns_{2n}}=0,
\end{equation}
where $s_{n}=(q,\sin2\pi nt)$ and $\left(  .,.\right)  $ is the inner product
in $L_{2}\left[  0,1\right]  ,$ then the large eigenvalues of each of the
operators corresponding to these cases are simple for $\sigma=1$. Moreover, if
there exists a sequence $\left\{  n_{k}\right\}  $ such that (11) holds when
$n$ is replaced by $n_{k},$ then the root functions of these operators do not
form a Riesz basis. Similarly, if the condition
\[
\lim_{n\rightarrow\infty}\dfrac{\ln\left\vert n\right\vert }{ns_{2n+1}}=0
\]
holds instead of (11), then the same statements continue to hold for
$\sigma=0$.

In this paper we find explicit conditions on potential $q$ such that the
system of the root functions of the Sturm-Liouville operator generated by (1)
and the general regular boundary conditions does not form a Reisz basis.

Now let us describe briefly the main results of this paper. Let $T_{1}%
^{\sigma}(q)$ and $T_{2}^{\sigma}(q)$\ be the Sturm-Liouville operators
associated by the boundary conditions (7) and (9) respectively. Without loss
of generality we assume that
\[
\int_{0}^{1}q\left(  t\right)  dt=0.
\]
First we prove that if $q\in L_{1}\left[  0,1\right]  $ and
\begin{equation}
\int_{0}^{1}\sin\left(  2\pi nt\right)  q\left(  t\right)  dt=o\left(
\frac{1}{n}\right)
\end{equation}
then the large eigenvalues of $T_{1}^{\sigma}(q)$ and $T_{2}^{\sigma}(q)$\ for
$\sigma=1,$\ are simple. Moreover if there exists a subsequence $\left\{
n_{k}\right\}  $ such that (12) holds whenever $n$ is replaced by $n_{k}$,
then the system of the root functions of each operators $T_{1}^{\sigma}(q)$
and $T_{2}^{\sigma}(q)$\ for $\sigma=1,$\ does not form a Riesz basis. The
same results continue to hold for $T_{1}^{\sigma}(q)$ and $T_{2}^{\sigma}%
(q)$\ for $\sigma=0,$ if instead of (12) the condition
\begin{equation}
\int_{0}^{1}\sin(\left(  2n+1)\pi t\right)  q\left(  t\right)  dt=o\left(
\frac{1}{n}\right)  \tag{12a}%
\end{equation}
holds.

The other main result is the following: If the potential $q$ is an absolutely
continuous function and
\begin{equation}
q\left(  0\right)  +\left(  -1\right)  ^{\sigma}q\left(  1\right)  \neq
\frac{2\beta_{2}^{2}}{1-\beta_{1}^{2}}%
\end{equation}
then the large eigenvalues of $T_{1}^{\sigma}(q)$ for $\sigma=0,1$\ are simple
and the system of the root functions of $T_{1}^{\sigma}(q)$\ does not form a
Riesz basis. Similarly, if the condition
\begin{equation}
q\left(  0\right)  +\left(  -1\right)  ^{\sigma}q\left(  1\right)  \neq
\frac{2\beta_{4}^{2}}{\beta_{3}^{2}-1}%
\end{equation}
holds instead of (13), then the same results remain valid for $T_{2}^{\sigma
}(q)$ for $\sigma=0,1$. Moreover we obtain subtle asymptotic formulas for the
eigenvalues and eigenfunctions for the operators $T_{1}^{\sigma}(q)$ and
$T_{2}^{\sigma}(q)$ for both cases $q\in L_{1}\left[  0,1\right]  $ and $q$ is
an absolutely continuous function.

Note that the general cases we investigate in this paper are essentially
different from the case $\left(  c\right)  $ as the method of investigations
and obtained results.

\section{Main Results}

We will focus only on the operator $T_{1}^{1}\left(  q\right)  $. The
investigations of the operators $T_{1}^{0}\left(  q\right)  ,T_{2}^{0}\left(
q\right)  $ and $T_{2}^{1}\left(  q\right)  $ are similar. First let us prove
the following simple proposition about $T_{1}^{1}\left(  0\right)  $. Note
that the simplest case $q(x)\equiv0$ was completely solved in [10]. Here we
write the asymptotic formulas for the eigenvalues of $T_{1}^{1}\left(
0\right)  $ in the form we need.

\begin{proposition}
The square roots (with nonnegative real part) of the eigenvalues of the
operator $T_{1}^{1}\left(  0\right)  $ consist of the sequences $\left\{
\mu_{n,1}\left(  0\right)  \right\}  $ and $\left\{  \mu_{n,2}\left(
0\right)  \right\}  $ satisfying
\begin{equation}
\mu_{n,1}\left(  0\right)  =2\pi n,
\end{equation}%
\begin{equation}
\mu_{n.2}\left(  0\right)  =2\pi n+\frac{\beta_{2}}{\beta_{1}-1}\frac{1}{\pi
n}+O\left(  \frac{1}{n^{2}}\right)  .
\end{equation}

\end{proposition}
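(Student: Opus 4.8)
The plan is to solve the free equation $-y'' = \mu^{2}y$ explicitly and convert the boundary conditions (7) with $\sigma=1$ into the vanishing of a $2\times 2$ determinant. Writing the general solution as $y(x)=C\cos(\mu x)+D\sin(\mu x)$, I record the boundary data $y(0)=C$, $y'(0)=\mu D$, $y(1)=C\cos\mu+D\sin\mu$, and $y'(1)=\mu(-C\sin\mu+D\cos\mu)$. Since $(-1)^{\sigma}=-1$, the two conditions are $y'(0)+\beta_{1}y'(1)+\beta_{2}y(1)=0$ and $y(0)-y(1)=0$; substituting the data yields a homogeneous linear system in $(C,D)$, and a nontrivial solution exists precisely when its coefficient determinant vanishes.

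The first step is to compute that determinant. Expanding and using $\sin^{2}\mu+\cos^{2}\mu=1$, the mixed terms $\beta_{2}\sin\mu\cos\mu$ cancel and the determinant collapses to
\[
\Delta(\mu)=\mu(\beta_{1}-1)(1-\cos\mu)-\beta_{2}\sin\mu .
\]
The decisive simplification is to apply the half-angle identities $1-\cos\mu=2\sin^{2}(\mu/2)$ and $\sin\mu=2\sin(\mu/2)\cos(\mu/2)$, which factor $\Delta$ as
\[
\Delta(\mu)=2\sin\tfrac{\mu}{2}\,\bigl[\mu(\beta_{1}-1)\sin\tfrac{\mu}{2}-\beta_{2}\cos\tfrac{\mu}{2}\bigr].
\]
This splits the square roots of the spectrum into two families. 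The factor $\sin(\mu/2)=0$ gives $\mu/2=\pi n$, that is $\mu_{n,1}(0)=2\pi n$, which is (15).

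For the second family I analyze the transcendental equation $\mu(\beta_{1}-1)\tan(\mu/2)=\beta_{2}$; here $\beta_{1}-1\neq0$ by (8) for $\sigma=1$, so this division is legitimate. I would locate the roots by perturbing off the unperturbed abscissae: set $\mu=2\pi n+\delta_{n}$ and use $\tan(\mu/2)=\tan(\pi n+\delta_{n}/2)=\tan(\delta_{n}/2)$. Substituting gives $(2\pi n+\delta_{n})(\beta_{1}-1)\tan(\delta_{n}/2)=\beta_{2}$. To leading order $\tan(\delta_{n}/2)\approx\delta_{n}/2$ and $2\pi n+\delta_{n}\approx2\pi n$, so $\pi n(\beta_{1}-1)\delta_{n}\approx\beta_{2}$, whence $\delta_{n}=\frac{\beta_{2}}{\beta_{1}-1}\frac{1}{\pi n}+O(1/n^{2})$, which is exactly (16).

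The main obstacle is making this perturbation argument rigorous rather than merely formal: I must confirm that for each large $n$ there is exactly one root of the second factor in a fixed neighborhood of $2\pi n$ and control its error to order $1/n^{2}$. I would do this by recasting the equation as a fixed-point relation $\delta_{n}=\frac{\beta_{2}}{\pi n(\beta_{1}-1)}+(\text{higher order})$ and invoking the contraction mapping principle (or, equivalently, Rouch\'{e}'s theorem on a small circle about $2\pi n$), then feeding the first-order estimate $\delta_{n}=O(1/n)$ back into the Taylor expansions $\tan(\delta_{n}/2)=\delta_{n}/2+O(\delta_{n}^{3})$ and $(2\pi n+\delta_{n})^{-1}=(2\pi n)^{-1}+O(1/n^{2})$ to pin down the remainder. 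The only points needing care beyond routine bookkeeping are the nonvanishing of $\beta_{1}-1$ and the verification that these two families exhaust all square roots with nonnegative real part.
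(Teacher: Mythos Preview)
Your proposal is correct and follows essentially the same strategy as the paper: compute and factor the characteristic determinant of $T_{1}^{1}(0)$, read off $\mu_{n,1}(0)=2\pi n$ from the trivial factor, and extract the asymptotics of $\mu_{n,2}(0)$ from the remaining factor by a Rouch\'e/Taylor argument near $2\pi n$. The only difference is cosmetic---you work with the real pair $\cos(\mu x),\ \sin(\mu x)$ and the half-angle factorization, whereas the paper uses the exponential pair $e^{\pm i\mu x}$ and the factorization $(1-e^{-i\mu})[\,i\mu(\beta_{1}-1)(e^{i\mu}-1)+\beta_{2}(e^{i\mu}+1)\,]$; the two determinants differ only by a nonzero constant multiple.
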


\begin{proof}
Using the fundamental solutions $e^{i\mu x}$ and $e^{-i\mu x}$ of
$-y^{\prime\prime}=\lambda y$ where $\mu=\sqrt{\lambda}$, one can readily see
that the characteristic determinant $\Delta_{0}\left(  \mu\right)  $ of
$T_{1}^{1}\left(  0\right)  $ has the form
\begin{equation}
\Delta_{0}\left(  \mu\right)  =\left(  1-e^{i\mu}\right)  \left(  i\mu
+\beta_{1}i\mu e^{-i\mu}-\beta_{2}e^{-i\mu}\right)  +\left(  i\mu+\beta
_{1}i\mu e^{i\mu}+\beta_{2}e^{i\mu}\right)  \left(  1-e^{-i\mu}\right)
=0.\nonumber
\end{equation}
After simplifying this equation, we have%
\begin{equation}
\Delta_{0}\left(  \mu\right)  =\left(  1-e^{-i\mu}\right)  \left[  i\mu\left(
\beta_{1}-1\right)  \left(  e^{i\mu}-1\right)  +\beta_{2}\left(  e^{i\mu
}+1\right)  \right]  =0
\end{equation}
which is equivalent to
\begin{equation}
1-e^{-i\mu}=0\text{ or }f(\mu)=0
\end{equation}
where
\begin{equation}
f(\mu)=e^{i\mu}-1-\frac{i\beta_{2}}{\beta_{1}-1}\frac{e^{i\mu}+1}{\mu}%
=e^{i\mu}-1+O\left(  \frac{1}{\mu}\right)
\end{equation}
The solution of the first equation in (18) is $\mu_{n,1}\left(  0\right)
=2\pi n$ for $n\in\mathbb{Z}$, that is, (15) is proved.

To prove (16), we estimate the roots of (19). Using Rouche's theorem on the
circle $\left\{  \mu:\text{ }\left\vert \mu-2\pi n\right\vert =\dfrac{c}%
{n}\right\}  $ for some constant $c$, one can easily see that, the roots of
(19) has the form%
\begin{equation}
\mu_{2,n}^{0}=2\pi n+\xi\text{ }\&\text{ }\xi=O\left(  \dfrac{1}{n}\right)  .
\end{equation}
Now we prove that
\begin{equation}
\xi=\frac{\beta_{2}}{\beta_{1}-1}\frac{1}{\pi n}+O\left(  \frac{1}{n^{2}%
}\right)  .
\end{equation}
For this, let us consider the roots of (19) in detail. By (20) and (19) we
have
\begin{equation}
e^{i\left(  2\pi n+\xi\right)  }-1=\frac{i\beta_{2}}{\beta_{1}-1}%
\frac{2+O\left(  \frac{1}{n}\right)  }{2\pi n+O\left(  \frac{1}{n}\right)
}=\frac{2i\beta_{2}}{\beta_{1}-1}\frac{1}{2\pi n}+O\left(  \frac{1}{n^{2}%
}\right)  .
\end{equation}
On the other hand, using Maclaurin expansion of $e^{i\xi}$ and taking into
account the second equality of (20) we see that
\[
e^{i\left(  2\pi n+\xi\right)  }-1=i\xi+O\left(  \frac{1}{n^{2}}\right)
\]
This with (22) gives us (21). Now (16) follows from (20) and (21). Lemma is proved.
\end{proof}

For $q\neq0$ it is known that (see (21) of [13]) the characteristic polynomial
of $T_{1}^{1}\left(  q\right)  $ has the form
\begin{equation}
\Delta\left(  \mu\right)  =\Delta_{0}\left(  \mu\right)  -\frac{\beta_{1}%
+1}{2}\left\{  e^{i\mu}\left(  c_{\mu}-is_{\mu}\right)  -e^{-i\mu}\left(
c_{\mu}+is_{\mu}\right)  \right\}  +o\left(  \frac{1}{\mu}\right)  ,
\end{equation}
where $\Delta_{0}\left(  \mu\right)  $\ is defined in (17) and
\begin{equation}
c_{\mu}=\int_{0}^{1}\cos\left(  2\mu t\right)  q\left(  t\right)  dt,\text{
}s_{\mu}=\int_{0}^{1}\sin\left(  2\mu t\right)  q\left(  t\right)  dt.
\end{equation}
After some arrangements (23) can be written in the form
\begin{equation}
\Delta\left(  \mu\right)  =\Delta_{0}\left(  \mu\right)  -\frac{\beta_{1}%
+1}{2}e^{-i\mu}\left\{  c_{\mu}\left(  e^{2i\mu}-1\right)  -is_{\mu}\left(
e^{2i\mu}+1\right)  \right\}  +o\left(  \frac{1}{\mu}\right)  .
\end{equation}
Using (17) in this formula we obtain%
\begin{gather*}
\Delta\left(  \mu\right)  =\left(  1-e^{-i\mu}\right)  \left[  i\mu\left(
\beta_{1}-1\right)  \left(  e^{i\mu}-1\right)  +\beta_{2}\left(  e^{i\mu
}+1\right)  \right]  -\\
-\frac{\beta_{1}+1}{2}e^{-i\mu}\left\{  c_{\mu}\left(  e^{2i\mu}-1\right)
-is_{\mu}\left(  e^{2i\mu}+1\right)  \right\}  +o\left(  \frac{1}{\mu}\right)
\\
=\left(  1-e^{-i\mu}\right)  \left[  i\mu\left(  \beta_{1}-1\right)  \left(
e^{i\mu}-1\right)  +\beta_{2}\left(  e^{i\mu}+1\right)  -\frac{\beta_{1}+1}%
{2}c_{\mu}\left(  e^{i\mu}+1\right)  \right]  +\\
+i\left(  \beta_{1}+1\right)  s_{\mu}\cos\mu+o\left(  \frac{1}{\mu}\right)  .
\end{gather*}

Therefore the characteristic determinant $\Delta\left(  \mu\right)  $, can be
written as
\begin{equation}
\Delta\left(  \mu\right)  =\Delta_{1}\left(  \mu\right)  +i\left(  \beta
_{1}+1\right)  s_{\mu}\cos\mu+o\left(  \frac{1}{\mu}\right)  .
\end{equation}
where
\begin{equation}
\Delta_{1}\left(  \mu\right)  =\left(  1-e^{-i\mu}\right)  \left[  i\mu\left(
\beta_{1}-1\right)  \left(  e^{i\mu}-1\right)  +\left(  \beta_{2}-\frac
{\beta_{1}+1}{2}c_{\mu}\right)  \left(  e^{i\mu}+1\right)  \right]  .
\end{equation}
To obtain the asymptotic formulas for the eigenvalues of $T_{1}^{1}\left(
q\right)  $ first let us consider the roots of $\Delta_{1}\left(  \mu\right)
.$

\begin{lemma}
The roots of the function $\Delta_{1}\left(  \mu\right)  $ consist of the
sequences $\left\{  \mu_{n,1}^{1}\right\}  $ and $\left\{  \mu_{n,2}%
^{1}\right\}  $ such that
\begin{equation}
\mu_{n,1}^{1}=2\pi n,\text{ }n\in\mathbb{Z},
\end{equation}%
\begin{equation}
\mu_{n.2}^{1}=2\pi n+\frac{\beta_{2}}{\beta_{1}-1}\frac{1}{\pi n}+o\left(
\frac{1}{n}\right)  .
\end{equation}

\end{lemma}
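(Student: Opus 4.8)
The plan is to analyze $\Delta_1(\mu)$ by the same factorization we used for $\Delta_0(\mu)$ in the Proposition, treating the potential-dependent term $\frac{\beta_1+1}{2}c_\mu$ as a small perturbation. The factor $\left(1-e^{-i\mu}\right)$ in (27) is identical to the one appearing in (17), so its zeros give immediately the sequence $\mu_{n,1}^1 = 2\pi n$, establishing (28). For the second sequence I would study the bracketed factor
\begin{equation}
g(\mu) = i\mu(\beta_1-1)\left(e^{i\mu}-1\right) + \left(\beta_2 - \tfrac{\beta_1+1}{2}c_\mu\right)\left(e^{i\mu}+1\right),\nonumber
\end{equation}
which is the analogue of $\mu f(\mu)$ from (19), now carrying the extra term $-\frac{\beta_1+1}{2}c_\mu$.

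First I would record the key fact that, because $q \in L_1[0,1]$, the Riemann-Lebesgue lemma gives $c_\mu \to 0$ along the real axis as $n \to \infty$; more precisely $c_\mu = o(1)$, and this is what makes the perturbation term genuinely lower order than $\beta_2$. Dividing $g(\mu)$ by $\mu$, I obtain a function of the same shape as $f(\mu)$ in (20), namely $e^{i\mu}-1 + O(1/\mu)$, so by Rouch\'e's theorem on the circles $\{\mu: |\mu - 2\pi n| = c/n\}$ exactly as in the Proposition, each such circle encloses precisely one root, giving $\mu_{n,2}^1 = 2\pi n + \xi$ with $\xi = O(1/n)$.

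To pin down the leading term of $\xi$ and prove (29), I would substitute $\mu = 2\pi n + \xi$ into the equation $g(\mu)=0$ and expand. On the left, $e^{i(2\pi n + \xi)} - 1 = i\xi + O(\xi^2) = i\xi + O(1/n^2)$ by the Maclaurin expansion, exactly as in (22)--(23). On the right, dividing through and isolating, the equation becomes
\begin{equation}
e^{i\mu} - 1 = \frac{\beta_2 - \frac{\beta_1+1}{2}c_\mu}{(\beta_1-1)}\cdot\frac{e^{i\mu}+1}{i\mu} \cdot(-1) \cdot(\text{unit factor}),\nonumber
\end{equation}
so that the right-hand side equals $\frac{\beta_2}{\beta_1-1}\cdot\frac{1}{\pi n}$ plus a term of size $\frac{c_\mu}{n} = o(1/n)$ coming from the $c_\mu$ contribution, plus $O(1/n^2)$ from the denominator expansion. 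Matching $i\xi$ against this yields $\xi = \frac{\beta_2}{\beta_1-1}\frac{1}{\pi n} + o(1/n)$, which is (29); note the error is only $o(1/n)$ rather than $O(1/n^2)$ precisely because $c_\mu$ decays without a known rate under the bare $L_1$ hypothesis.

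The main obstacle I anticipate is the Rouch\'e step: one must verify that on the circle $|\mu - 2\pi n| = c/n$ the dominant term $i\mu(\beta_1-1)(e^{i\mu}-1)$ genuinely exceeds the combined lower-order terms $\left(\beta_2 - \frac{\beta_1+1}{2}c_\mu\right)(e^{i\mu}+1)$ in modulus uniformly for all large $n$. Since $c_\mu$ is only $o(1)$ and not uniformly small off the real axis, I would need the standard estimate $|e^{i\mu}-1| \asymp |\xi|$ on these circles together with the fact that $c_\mu$ stays bounded on the relevant strip; choosing $c$ large enough relative to $|\beta_2|/|\beta_1-1|$ (using $\beta_1 \neq 1$, which holds by (8) for $\sigma=1$) then secures the domination and simultaneously the count of exactly one root per circle. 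Everything else is a routine repetition of the Proposition's argument with $\beta_2$ replaced by $\beta_2 - \frac{\beta_1+1}{2}c_\mu$.
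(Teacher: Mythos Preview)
Your argument is correct and proceeds along the same lines as the paper: factor out $(1-e^{-i\mu})$ to obtain (28), use $c_\mu=o(1)$ via Riemann--Lebesgue, and apply Rouch\'e's theorem to the second factor. The only organizational difference is in the Rouch\'e step. You compare the normalized factor to $e^{i\mu}-1$ on a circle of fixed rate $c/n$ about $2\pi n$, obtain $\xi=O(1/n)$, and then substitute back to extract the leading coefficient (exactly replaying the Proposition with $\beta_2$ replaced by $\beta_2-\tfrac{\beta_1+1}{2}c_\mu$). The paper instead writes $g(\mu)=f(\mu)-\tfrac{(\beta_1+1)c_\mu}{2(\beta_1-1)}\cdot\tfrac{e^{i\mu}+1}{i\mu}$ and compares $g$ directly to $f$ on a circle $\{|\mu-\mu_{n,2}(0)|=\varepsilon_n/n\}$ centered at the already-known root $\mu_{n,2}(0)$ of $f$, choosing $\varepsilon_n\to0$ slowly enough that $\delta_n=o(\varepsilon_n)$ where $|g-f|<\delta_n/n$; Rouch\'e then places a root of $g$ within $\varepsilon_n/n=o(1/n)$ of $\mu_{n,2}(0)$, and (29) follows at once from (16) without any substitution. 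Your route is slightly longer but perfectly valid; the paper's shortcut buys you (29) in one Rouch\'e application by leveraging the Proposition's conclusion rather than its method.
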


\begin{proof}
The zeros of $\Delta_{1}\left(  \mu\right)  $ are the zeros of the equations
\[
1-e^{-i\mu}=0\text{,}%
\]
and%
\[
g\left(  \mu\right)  =:e^{i\mu}-1+\frac{1}{\beta_{1}-1}\left(  \beta_{2}%
-\frac{\beta_{1}+1}{2}c_{\mu}\right)  \frac{e^{i\mu}+1}{i\mu}=0.
\]
The roots of the first equation are $2\pi n$ for $n\in\mathbb{Z}$, that is
(28) holds. By definition of $f\left(  \mu\right)  $ (see (19)) we have
\[
g\left(  \mu\right)  =f(\mu)-\frac{\frac{\beta_{1}+1}{2}c_{\mu}}{\beta_{1}%
-1}\frac{e^{i\mu}+1}{i\mu}.
\]
Since $c_{\mu}=o(1)$, there exists a sequence $\delta_{n}$ such that
$\delta_{n}=o(1)$\ and
\begin{equation}
\left\vert g\left(  \mu\right)  -f(\mu)\right\vert <\frac{\delta_{n}}{n}%
\end{equation}
for $\mu\in U(2\pi n),$ where $U(2\pi n)$ is $O\left(  \dfrac{1}{n}\right)
$-neighborhood of $2\pi n$.

Now to estimate the zeros of $g\left(  \mu\right)  $, we use Rouche's theorem
for the functions $f\left(  \mu\right)  $ and $g\left(  \mu\right)  $ on the
circle
\begin{equation}
\gamma_{n}=\left\{  \mu:\mid\mu-\mu_{n,2}\left(  0\right)  \mid=\frac
{\varepsilon_{n}}{n}\right\}  ,
\end{equation}
where $\mu_{n,2}\left(  0\right)  $ is defined in (16) and $\varepsilon_{n}$
is chosen so that
\begin{equation}
\varepsilon_{n}=o(1)\ \&\text{ }\delta_{n}=o(\varepsilon_{n}).
\end{equation}
For this let us estimate $\left\vert f\left(  \mu\right)  \right\vert $ on
$\gamma_{n}$ by using the Taylor series of $f(\mu)$\ about $\mu_{n,2}\left(
0\right)  :$
\[
f\left(  \mu\right)  =f^{\prime}\left(  \mu_{n,2}\right)  \left(  \mu
-\mu_{n,2}\right)  +\frac{f^{\prime\prime}\left(  \mu_{n,2}\right)  }%
{2!}\left(  \mu-\mu_{n,2}\right)  ^{2}+\cdots
\]
Since
\[
f^{\prime}(\mu)=ie^{i\mu}-\frac{i\beta_{2}}{\beta_{1}-1}\frac{ie^{i\mu}}{i\mu
}+O\left(  \dfrac{1}{n^{2}}\right)  \sim1\text{, }f^{\prime\prime}(\mu
)\sim1,\ldots,
\]
there exist a constant $c>0$ such that $\left\vert f^{\prime}\left(
\mu\right)  \right\vert >c$ and
\begin{equation}
\left\vert f\left(  \mu\right)  \right\vert >c\frac{\varepsilon_{n}}{2n}%
\end{equation}
for $\mu\in\gamma_{n}$. Thus by (30)-(33) and Rouche's theorem, there exists a
root $\mu_{n,2}^{1}$ of $g\left(  \mu\right)  $ inside the circle (31).
Therefore (29), follows from (16).
\end{proof}

Now using (26), (27) and Lemma 1, we get one of the main results of this paper.

\begin{theorem}
$\left(  a\right)  $ If (12) holds, then the large eigenvalues of $T_{1}%
^{1}(q)$ are simple and the square roots (with nonnegative real part) of these
eigenvalues consist of two sequences $\{\mu_{n,1}\left(  q\right)  \}$ and
$\{\mu_{n,2}\left(  q\right)  \}$ satisfying the asymptotic formulas
\begin{equation}
\mu_{n,1}\left(  q\right)  =2\pi n+o\left(  \frac{1}{n}\right)  \text{, }%
\end{equation}%
\begin{equation}
\mu_{n,2}\left(  q\right)  =2\pi n+\frac{\beta_{2}}{\beta_{1}-1}\frac{1}{\pi
n}+o\left(  \frac{1}{n}\right)  .
\end{equation}
Moreover the normalized eigenfunctions $\varphi_{n,1}\left(  x\right)  $ and
$\varphi_{n,2}\left(  x\right)  $ corresponding to the eigenvalues $\left(
\mu_{n,1}\left(  q\right)  \right)  ^{2}$ and $\left(  \mu_{n,2}\left(
q\right)  \right)  ^{2}$satisfy the same asymptotic formula
\begin{equation}
\varphi_{n,j}\left(  x\right)  =\sqrt{2}\cos2\pi nx+O\left(  \frac{1}%
{n}\right)
\end{equation}
for $j=1,2$

$\left(  b\right)  $ If there exists a subsequence $\left\{  n_{k}\right\}  $
such that (12) holds whenever $n$ is replaced by $n_{k}$, then the system of
the root functions of $T_{1}^{1}(q)$ does not form a Riesz basis.
\end{theorem}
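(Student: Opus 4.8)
The plan is to derive part $(a)$ from the factorization (26)--(27) together with Lemma 1 by a Rouch\'e argument, and then to read off part $(b)$ from the resulting eigenfunction asymptotics (37). For the eigenvalues I would localize near the two roots $2\pi n$ and $\mu_{n,2}^1$ of $\Delta_1$ supplied by Lemma 1 and compare $\Delta$ with $\Delta_1$ on small circles about each. The perturbation in (26) is $R(\mu):=i(\beta_1+1)s_\mu\cos\mu+o(1/\mu)$, and the crux is to show $|R(\mu)|=o(1/n)$ on a disc $|\mu-2\pi n|=O(1/n)$. There $\cos\mu=1+O(1/n^2)$, so the smallness must come from $s_\mu$; writing $\delta=\mu-2\pi n$ and $\sin(2\mu t)=\sin(4\pi nt)\cos(2\delta t)+\cos(4\pi nt)\sin(2\delta t)$ and using the Riemann--Lebesgue lemma for $\int_0^1 t\cos(4\pi nt)q\,dt=o(1)$, one obtains $s_\mu=s_{2\pi n}+o(\delta)+O(\delta^2)$. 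Since (12) forces $s_{2\pi n}=o(1/n)$, this gives $|R(\mu)|=o(1/n)$ on the circle. By Lemma 1 each root of $\Delta_1$ is simple with $|\Delta_1'|$ bounded below by a positive constant of order $|\beta_2|\neq 0$, so $|\Delta_1(\mu)|\gtrsim\rho_n$ on a circle of radius $\rho_n$; choosing $\rho_n$ with $|R|\ll\rho_n\ll 1/n$ (possible, since the separation of the two roots is $\frac{|\beta_2|}{|\beta_1-1|}\frac{1}{\pi n}$, of exact order $1/n$) makes $|\Delta_1|>|R|$, and Rouch\'e's theorem places exactly one simple root of $\Delta$ inside each circle. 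This yields simplicity and the formulas (35)--(36).

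Next, for the eigenfunctions I would substitute the fundamental solutions $e^{\pm i\mu x}+O(1/\mu)$ into the boundary conditions (7) with $\sigma=1$, writing $y=Ae^{i\mu x}+Be^{-i\mu x}+O(1/\mu)$. The condition $y(0)=y(1)$ is degenerate to leading order at $\mu\approx 2\pi n$, while $y'(0)+\beta_1 y'(1)+\beta_2 y(1)=0$ reads $i\mu(1+\beta_1)(A-B)+\beta_2(A+B)=O(1/n)$ and hence forces $A-B=O(1/n)(A+B)$. Thus $y\sim A\left(e^{i\mu x}+e^{-i\mu x}\right)\sim 2A\cos 2\pi nx$ for \emph{both} eigenvalues $\mu_{n,1}(q)$ and $\mu_{n,2}(q)$, and after normalization one gets (37) with the same leading term $\sqrt2\cos 2\pi nx$ for $j=1,2$.

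Finally, part $(b)$. The localization above near $2\pi n$ uses only $s_{2\pi n}=o(1/n)$, which the subsequence hypothesis supplies for infinitely many eigenvalue indices; for these indices (37) holds, so $\|\varphi_{n_k,1}-\varphi_{n_k,2}\|_{L_2}=O(1/n_k)\to 0$, and since both functions are normalized, $\langle\varphi_{n_k,1},\varphi_{n_k,2}\rangle\to 1$. I would then invoke the necessary condition for a Riesz basis $\{g_m\}$: there is $c>0$ with $\|\sum_m a_m g_m\|^2\ge c\sum_m|a_m|^2$ for all finite scalar sequences, and $0<\inf_m\|g_m\|\le\sup_m\|g_m\|<\infty$, so one may assume the root functions normalized. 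Testing the inequality on the single pair $\varphi_{n_k,1},\varphi_{n_k,2}$ with $a=1$ and $b=-\langle\varphi_{n_k,1},\varphi_{n_k,2}\rangle/|\langle\varphi_{n_k,1},\varphi_{n_k,2}\rangle|$ collapses the left-hand side to $2\left(1-|\langle\varphi_{n_k,1},\varphi_{n_k,2}\rangle|\right)\to 0$ while the right-hand side equals $2c>0$, a contradiction. Hence the root functions do not form a Riesz basis.

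I expect the principal difficulty to be the eigenfunction step: establishing that the two nearby eigenfunctions share the \emph{same} leading term $\cos 2\pi nx$ rather than splitting into independent cosine and sine modes. This asymptotic parallelism is exactly the phenomenon that defeats the Riesz basis property, and it is where the hypothesis $s_{2\pi n}=o(1/n)$ is genuinely used; the secondary technical point is to keep the Rouch\'e comparison quantitative, verifying that the $o(1/n)$ perturbation stays strictly below the $\Theta(1/n)$ spectral gap on circles of radius $o(1/n)$, which simultaneously gives simplicity and the sharp $o(1/n)$ remainders in (35)--(36).
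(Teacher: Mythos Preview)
Your proposal is correct and follows essentially the same route as the paper: a Rouch\'e comparison of $\Delta$ with $\Delta_1$ on circles of radius $o(1/n)$ about the roots $\mu_{n,1}^1=2\pi n$ and $\mu_{n,2}^1$ supplied by Lemma~1, using (12) to force $s_\mu=o(1/n)$ there, followed by the standard fundamental-system computation of the eigenfunctions and the angle-collapse argument for (b). The only cosmetic differences are that the paper estimates $|\Delta_1|$ on $\Gamma_j(r_n)$ via the explicit factorization $\Delta_1=(1-e^{-i\mu})\,i\mu\,h(\mu)$ (treating the two factors separately) rather than via your derivative bound $|\Delta_1'|\gtrsim|\beta_2|$, obtains the eigenfunctions from the $2\times2$ determinant rather than by solving the boundary conditions directly, and for (b) simply cites Shkalikov~[20] (or Makin~[12,13]) instead of writing out the Riesz-basis contradiction; one small correction to your closing commentary is that (12) is \emph{not} what produces the cosine parallelism in (36) --- that already follows from $\mu_{n,j}=2\pi n+O(1/n)$ --- but rather what guarantees \emph{simplicity}, so that the two root functions are genuine eigenfunctions rather than an eigenfunction/associated-function pair.
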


\begin{proof}
$\left(  a\right)  $ To prove (34) and (35), we show that the large roots of
$\Delta\left(  \mu\right)  $ lies in $o\left(  \frac{1}{n}\right)
$-neighborhood of the roots of $\Delta_{1}\left(  \mu\right)  $ by using
Rouche's theorem for $\Delta\left(  \mu\right)  $ and $\Delta_{1}\left(
\mu\right)  $\ on $\Gamma_{1}\left(  r_{n}\right)  $, $\Gamma_{2}\left(
r_{n}\right)  ,$ where
\begin{equation}
\Gamma_{j}\left(  r_{n}\right)  =\left\{  \mu:\text{ }\left\vert \mu-\mu
_{n,j}^{1}\right\vert =r_{n}\right\}  ,r_{n}=o\left(  \frac{1}{n}\right)
\end{equation}
and $\mu_{n,j}^{1}$ for $j=1,2$ are the roots of $\Delta_{1}\left(
\mu\right)  $. If $\mu\in\Gamma_{j}\left(  r_{n}\right)  $ for $j=1,2$ then by
(12) $s_{\mu}=o\left(  \dfrac{1}{n}\right)  $ and by (26)
\begin{equation}
a\left(  \mu\right)  =:\left\vert \Delta\left(  \mu\right)  -\Delta_{1}\left(
\mu\right)  \right\vert <b_{n},\text{ }b_{n}=o\left(  \frac{1}{n}\right)  .
\end{equation}
We can choose $r_{n}$ so that
\begin{equation}
b_{n}=o\left(  r_{n}\right)  .
\end{equation}
Now let us estimate $\Delta_{1}\left(  \mu\right)  $ on the circles
$\Gamma_{1}\left(  r_{n}\right)  $, $\Gamma_{2}\left(  r_{n}\right)  $. By
(27)
\begin{equation}
\Delta_{1}\left(  \mu\right)  =\left(  1-e^{-i\mu}\right)  i\mu h\left(
\mu\right)
\end{equation}
where
\begin{equation}
h\left(  \mu\right)  =\left(  \beta_{1}-1\right)  \left(  e^{i\mu}-1\right)
+\left(  \beta_{2}-\frac{\beta_{1}+1}{2}c_{\mu}\right)  \frac{e^{i\mu}+1}%
{i\mu}.
\end{equation}
It follows from (28), (29) and (37) that if $\mu\in\Gamma_{1}\left(
r_{n}\right)  $ and $\mu\in\Gamma_{2}\left(  r_{n}\right)  $ then $\mu=2\pi
n+r_{n}e^{i\theta}$ and $\mu=2\pi n+\dfrac{\beta_{2}}{\beta_{1}-1}\dfrac
{1}{\pi n}+r_{n}e^{i\theta}+o\left(  \dfrac{1}{n}\right)  $ respectively,
where $\theta\in\left(  0,2\pi\right)  $. Therefore
\begin{equation}
\left(  1-e^{-i\mu}\right)  \sim r_{n}\text{,}%
\end{equation}
and%
\begin{equation}
\left(  1-e^{-i\mu}\right)  \sim\frac{1}{n}\text{,}%
\end{equation}
on$\ \Gamma_{1}\left(  r_{n}\right)  $ and $\Gamma_{2}\left(  r_{n}\right)  $
respectively, where $a_{n}\sim b_{n}$ means that $a_{n}=O(b_{n})$ and
$b_{n}=O(a_{n}).$

Now let us consider $h\left(  \mu\right)  $ on$\ \Gamma_{j}\left(
r_{n}\right)  $, $j=1,2$. Since $\mu_{n,2}^{1}$ is the root of $h\left(
\mu\right)  $ the Taylor expansion of $h\left(  \mu\right)  $ about $\mu
_{n,2}^{1}$ is
\begin{equation}
h\left(  \mu\right)  =h^{\prime}\left(  \mu_{n,2}^{1}\right)  \left(  \mu
-\mu_{n,2}^{1}\right)  +\frac{h^{\prime\prime}\left(  \mu_{n,2}^{1}\right)
}{2!}\left(  \mu-\mu_{n,2}^{1}\right)  ^{2}+\cdots.
\end{equation}
By (41), we have
\[
h^{\prime}\left(  \mu\right)  =\left(  \beta_{1}-1\right)  ie^{i\mu}+\left(
\beta_{2}-\frac{\beta_{1}+1}{2}c_{\mu}\right)  \frac{ie^{i\mu}}{i\mu}+O\left(
\dfrac{1}{n^{2}}\right)  \sim1
\]
for $\mu\in\Gamma_{j}\left(  r_{n}\right)  ,$ $j=1,2$. Clearly $h^{\left(
k\right)  }\left(  \mu\right)  \sim1$ for $k>1$ and $\mu\in\Gamma_{j}\left(
r_{n}\right)  $. On the other hand, $\left(  \mu-\mu_{n,2}^{1}\right)
\sim\dfrac{1}{n}$ for $\mu\in\ \Gamma_{1}\left(  r_{n}\right)  $ and $\left(
\mu-\mu_{n,2}^{1}\right)  \sim r_{n}$ for $\mu\in\ \Gamma_{2}\left(
r_{n}\right)  $. Therefore using (44) we obtain
\[
h\left(  \mu\right)  \sim\frac{1}{n},\text{ }\forall\mu\in\ \Gamma_{1}\left(
r_{n}\right)  ,
\]%
\[
h\left(  \mu\right)  \sim r_{n},\text{ }\forall\mu\in\ \Gamma_{2}\left(
r_{n}\right)  .
\]

These formulas with (40), (42) and (43) imply that
\begin{equation}
\Delta_{1}\left(  \mu\right)  \sim r_{n},\text{ }\forall\mu\in\ \Gamma
_{j}\left(  r_{n}\right)
\end{equation}
for $j=1,2.$ Thus by (38), (39), (45) and Rouche's theorem, each of the disks
enclosed by the circles $\Gamma_{1}\left(  r_{n}\right)  $ and $\Gamma
_{2}\left(  r_{n}\right)  $ contains an eigenvalue which proves (34) and (35).

Since the distance between the centres of the circles $\Gamma_{1}\left(
r_{n}\right)  $ and $\Gamma_{2}\left(  r_{n}\right)  $ is of order $\dfrac
{1}{n}$, but $r_{n}=o\left(  \dfrac{1}{n}\right)  $, the eigenvalues inside
the circles $\Gamma_{1}\left(  r_{n}\right)  $ and $\Gamma_{2}\left(
r_{n}\right)  $ are different, that is, they are simple.

Now let us prove (36). Since the equation
\[
-y^{\prime\prime}+q(x)y=\mu^{2}y
\]
has the fundamental solutions of the form
\[
y_{1}(x,\mu)=e^{i\mu x}+O\left(  \dfrac{1}{\mu}\right)  ,\text{ }y_{2}%
(x,\mu)=e^{-i\mu x}+O\left(  \dfrac{1}{\mu}\right)
\]
(see p. 52 of [18]) the eigenfunctions of $T_{1}^{1}\left(  q\right)  $ are
\begin{align*}
y_{n,j}\left(  x\right)   &  =%
\begin{vmatrix}
e^{i\mu_{n,j}x}+O\left(  \dfrac{1}{\mu_{n,j}}\right)  & e^{-i\mu_{n,j}%
x}+O\left(  \dfrac{1}{\mu_{n,j}}\right) \\
i\mu_{n,j}\left(  1+\beta_{1}e^{i\mu_{n,j}}\right)  +\beta_{2}e^{i\mu_{n,j}%
}+O\left(  \dfrac{1}{\mu_{n,j}}\right)  & -i\mu_{n,j}\left(  1+\beta
_{1}e^{-i\mu_{n,j}}\right)  +\beta_{2}e^{-i\mu_{n,j}}+O\left(  \dfrac{1}%
{\mu_{n,j}}\right)
\end{vmatrix}
\\
&  =\left[  e^{i\mu_{n,j}x}+O\left(  \dfrac{1}{\mu_{n,j}}\right)  \right]
\left[  -i\mu_{n,j}\left(  1+\beta_{1}e^{-i\mu_{n,j}}\right)  +\beta
_{2}e^{-i\mu_{n,1j}}+O\left(  \dfrac{1}{\mu_{n,j}}\right)  \right]  -\\
&  -\left[  e^{-i\mu_{n,j}x}+O\left(  \dfrac{1}{\mu_{n,j}}\right)  \right]
\left[  i\mu_{n,j}\left(  1+\beta_{1}e^{i\mu_{n,1}}\right)  +\beta_{2}%
e^{i\mu_{n,j}}+O\left(  \dfrac{1}{\mu_{n,j}}\right)  \right]  .
\end{align*}
This with the formula
\[
\mu_{n,j}=2\pi n+O\left(  \frac{1}{n}\right)  ,
\]
for $j=1,2$ (see (34) and (35)), implies (36).

$\left(  b\right)  $ It is clear that if (12) holds for the subsequence
$\left\{  n_{k}\right\}  $ then (36) holds for $\left\{  n_{k}\right\}  $ too.
Therefore the angle between the eigenfunctions $\varphi_{n_{k},1}\left(
x\right)  $ and $\varphi_{n_{k},2}\left(  x\right)  $ corresponding to
$\mu_{n_{k},1}\left(  q\right)  $ and $\mu_{n_{k},2}\left(  q\right)  $\ tends
to zero. Hence the system of the root functions of $T_{1}^{1}(q)$\ does not
form a Riesz basis (see [20]). Note that $\left(  b\right)  $ follows also
from $\left(  a\right)  $ and Theorem 2 of [12, 13].
\end{proof}

Let $q$ be an absolutely continuous function. Then using the integration by
parts formula for $s_{\mu}$ and $c_{\mu}$ defined in (24) we obtain
\[
s_{\mu}=\frac{1}{2\mu}\left[  q\left(  0\right)  -q\left(  1\right)
\cos\left(  2\mu\right)  \right]  +o(\frac{1}{\mu})
\]
and
\[
c_{\mu}=\frac{1}{2\mu}q\left(  1\right)  \sin\left(  2\mu\right)  +o\left(
\frac{1}{\mu}\right)  .
\]
If $\mu\in U(2\pi n),$ where $U(2\pi n)$ is defined in the proof of Lemma 1 ,
then%
\[
\cos\mu=1+O\left(  \frac{1}{\mu}\right)  \&\sin\mu=O\left(  \frac{1}{\mu
}\right)
\]
Therefore we have
\[
s_{\mu}=\frac{1}{2\mu}\left[  q\left(  0\right)  -q\left(  1\right)  \right]
+o(\frac{1}{\mu}),\text{ }c_{\mu}=o\left(  \frac{1}{\mu}\right)
\]
and hence by (25)%
\begin{gather}
\Delta\left(  \mu\right)  =\Delta_{0}\left(  \mu\right)  +i\left(  \beta
_{1}+1\right)  s_{\mu}\cos\mu+o\left(  \frac{1}{\mu}\right) \nonumber\\
=\Delta_{0}\left(  \mu\right)  +\frac{a}{\mu}+o\left(  \frac{1}{\mu}\right)
\end{gather}
where
\[
a=\frac{i\left(  \beta_{1}+1\right)  }{2}\left[  q\left(  0\right)  -q\left(
1\right)  \right]  .
\]
Now we are ready to state the second main result of this paper.

\begin{theorem}
Let $q$ be an absolutely continuous function and (13) for $\sigma=1$ hold. Then

$\left(  a\right)  $ the large eigenvalues of $T_{1}^{1}(q)$ are simple and
the square roots (with nonnegative real part) of these eigenvalues consist of
two sequences $\{\mu_{n,1}(q)\}$ and $\{\mu_{n,2}(q)\}$ satisfying
\begin{equation}
\mu_{n,1}(q)=2\pi n+\frac{2\beta_{2}-i\sqrt{D}}{4\left(  \beta_{1}-1\right)
\pi n}+o\left(  \frac{1}{n}\right)  \text{,}%
\end{equation}%
\begin{equation}
\mu_{n,2}(q)=2\pi n+\frac{2\beta_{2}+i\sqrt{D}}{4\left(  \beta_{1}-1\right)
\pi n}+o\left(  \frac{1}{n}\right)  .
\end{equation}
where $D=2\left(  1-\beta_{1}^{2}\right)  \left[  q\left(  0\right)  -q\left(
1\right)  \right]  -\left(  2\beta_{2}\right)  ^{2}$

$\left(  b\right)  $ the system of the root functions of $T_{1}^{1}(q)$ does
not form a Riesz basis.
\end{theorem}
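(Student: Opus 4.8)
The plan is to rerun the Rouch\'{e} argument of Theorem 2 starting from the refined determinant (46), namely $\Delta\left(\mu\right)=\Delta_{0}\left(\mu\right)+\frac{a}{\mu}+o\left(\frac{1}{\mu}\right)$ with $a=\frac{i\left(\beta_{1}+1\right)}{2}\left[q\left(0\right)-q\left(1\right)\right]$. The one essential new feature is that, for absolutely continuous $q$, the term $a/\mu$ is of the same order $1/n$ as the gap between the two zeros of $\Delta_{0}$ near $2\pi n$ (compare (15), (16)); it must therefore be carried in the principal part rather than absorbed into the remainder, as was legitimate under hypothesis (12) in Theorem 2.

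First I would pass to the natural scale. Writing $\mu=2\pi n+\frac{w}{n}$ and using $e^{\pm i\mu}=1\pm\frac{iw}{n}+O\left(\frac{1}{n^{2}}\right)$, I would expand $\Delta_{0}$ from (17) to obtain
\[
\Delta_{0}\left(2\pi n+\tfrac{w}{n}\right)=\tfrac{1}{n}\left[-2\pi i\left(\beta_{1}-1\right)w^{2}+2i\beta_{2}w\right]+o\left(\tfrac{1}{n}\right),
\]
so that, by (46),
\[
n\,\Delta\left(2\pi n+\tfrac{w}{n}\right)\longrightarrow P\left(w\right):=-2\pi i\left(\beta_{1}-1\right)w^{2}+2i\beta_{2}w+\tfrac{a}{2\pi}
\]
uniformly on compact sets. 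Dividing by $i$, the equation $P\left(w\right)=0$ reads $2\pi\left(\beta_{1}-1\right)w^{2}-2\beta_{2}w-\frac{\beta_{1}+1}{4\pi}\left[q\left(0\right)-q\left(1\right)\right]=0$, whose discriminant is exactly $-D$; this identity is the single genuine computation of part $\left(a\right)$, and it produces the roots $w_{1,2}=\frac{2\beta_{2}\mp i\sqrt{D}}{4\pi\left(\beta_{1}-1\right)}$, i.e. the principal terms of (47) and (48).

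The delicate step is to transfer this limit to the zeros of $\Delta$ itself. Since $a/\mu$ and the splitting of $\Delta_{0}$ have the same order, the two target zeros are only $\frac{\left\vert w_{1}-w_{2}\right\vert}{n}=\frac{\left\vert \sqrt{D}\right\vert}{2\pi\left\vert \beta_{1}-1\right\vert n}$ apart, which is exactly the size of the neighbourhood and of the remainder; hence one cannot perturb the two zeros of $\Delta_{0}$ independently as was done on $\Gamma_{1}\left(r_{n}\right)$ and $\Gamma_{2}\left(r_{n}\right)$ in Theorem 2. I expect this to be the main obstacle, and the clean way around it is to observe that $w\mapsto n\,\Delta\left(2\pi n+\frac{w}{n}\right)$ is entire and converges to $P$ uniformly on compact sets. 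Because (13) forces $D\neq0$, the zeros $w_{1},w_{2}$ of $P$ are simple and distinct, so Hurwitz's theorem --- equivalently, Rouch\'{e} first on $\left\{\left\vert \mu-2\pi n\right\vert=R/n\right\}$ to count two zeros and then on the two disjoint circles $\left\{\left\vert \mu-2\pi n-w_{j}/n\right\vert=\rho_{n}/n\right\}$ with $\rho_{n}\to0$ chosen slowly enough that the $o\left(1/n\right)$ remainder is dominated --- yields exactly two simple zeros $\mu_{n,j}\left(q\right)=2\pi n+\frac{w_{j}}{n}+o\left(\frac{1}{n}\right)$ of $\Delta$. These are (47) and (48), and their separation of order $1/n$ gives the simplicity of the large eigenvalues asserted in $\left(a\right)$.

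For part $\left(b\right)$ I would read off the eigenfunctions from the determinantal formula used at the end of the proof of Theorem 2. Substituting $\mu_{n,j}\left(q\right)=2\pi n+O\left(\frac{1}{n}\right)$ and $e^{\pm i\mu_{n,j}}=1+O\left(\frac{1}{n}\right)$, the two cofactors become $\mp i\mu_{n,j}\left(1+\beta_{1}\right)+O\left(1\right)$, with leading part $\mp2\pi in\left(1+\beta_{1}\right)$; here I use the general-regularity hypothesis $\beta_{1}\neq-1$ (valid for $\sigma=1$) to keep this term from vanishing. Consequently both eigenfunctions collapse, after normalization, to $\sqrt{2}\cos\left(2\pi nx\right)+O\left(\frac{1}{n}\right)$, exactly as in (36), the $\beta_{2}$- and $q$-dependent contributions appearing only at relative order $1/n$. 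Thus the angle between $\varphi_{n,1}$ and $\varphi_{n,2}$ tends to zero and, by Shkalikov's criterion [20], the root functions of $T_{1}^{1}\left(q\right)$ do not form a Riesz basis. I would emphasize that (13) serves only to guarantee the simplicity in $\left(a\right)$: the near-parallelism in $\left(b\right)$ is insensitive to the splitting, precisely because both eigenfunctions share the common leading term $\cos\left(2\pi nx\right)$.
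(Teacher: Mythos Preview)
Your argument is correct and follows the same strategy as the paper: expand $\Delta$ locally near $2\pi n$, reduce the eigenvalue condition to a quadratic with discriminant $-D$, and then use Rouch\'e to pass from the model equation to $\Delta$ itself; part $(b)$ is handled in both places by the eigenfunction asymptotic (36) and Shkalikov's criterion. The only cosmetic difference is the change of variable: the paper sets $x=\mu(\mu-2\pi n)$ and works with the degree-four polynomial $f_{1}(\mu)=-i(\beta_{1}-1)x^{2}+2i\beta_{2}x+a$, applying Rouch\'e on circles $\gamma_{j}(r_{n})$ centred at the two relevant roots $\mu_{j2}$ of $f_{1}$, whereas you set $w=n(\mu-2\pi n)$ and invoke Hurwitz for $n\Delta(2\pi n+w/n)\to P(w)$ --- these are equivalent packagings of the same estimate. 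One small correction: your cross-references labelled ``Theorem~2'' (the $\Gamma_{j}(r_{n})$ circles, the determinantal eigenfunction formula) actually point to the proof of Theorem~1 in the paper's numbering.
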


\begin{proof}
$\left(  a\right)  $ By (46) $\mu_{n,j}(q)$ is a root of the equation%
\[
\mu\Delta_{0}\left(  \mu\right)  +a+o\left(  1\right)  =0.
\]
Using (17) in this equation we get
\begin{equation}
\mu\left(  1-e^{-i\mu}\right)  \left[  i\mu\left(  \beta_{1}-1\right)  \left(
e^{i\mu}-1\right)  +\beta_{2}\left(  e^{i\mu}+1\right)  \right]  +a+o\left(
1\right)  =0.
\end{equation}
By the Taylor expansions of $e^{-i\mu}$ and $e^{i\mu}$ at $2\pi n$ we have
\begin{align*}
e^{-i\mu}  &  =1-i\left(  \mu-2\pi n\right)  +O\left(  \frac{1}{n^{2}}\right)
,\\
e^{i\mu}  &  =1+i\left(  \mu-2\pi n\right)  +O\left(  \frac{1}{n^{2}}\right)
\end{align*}
for $\mu\in U(2\pi n).$ Therefore (49) can be written in the form
\begin{equation}
i\mu\left(  \mu-2\pi n\right)  \left[  -\mu\left(  \beta_{1}-1\right)  \left(
\mu-2\pi n\right)  +2\beta_{2}+O\left(  \frac{1}{\mu}\right)  \right]
+a+o\left(  1\right)  =0.
\end{equation}
To prove the formulas (47) and (48) we consider the equation (50). In (50)
substituting $x=\mu\left(  \mu-2\pi n\right)  $ and taking into account that
$x=O(1)$ for $\mu\in U(2\pi n)$ we get
\begin{equation}
-i\left(  \beta_{1}-1\right)  x^{2}+2i\beta_{2}x+a+o\left(  1\right)  =0.
\end{equation}
To solve (51) we compare the roots of the functions
\begin{equation}
f_{1}\left(  \mu\right)  =-i\left(  \beta_{1}-1\right)  x^{2}+2i\beta_{2}x+a
\end{equation}
and%
\begin{equation}
f_{2}\left(  \mu\right)  =-i\left(  \beta_{1}-1\right)  x^{2}+2i\beta
_{2}x+a+\alpha_{n}%
\end{equation}
on the set $U(2\pi n),$ where $\alpha_{n}=o\left(  1\right)  $. The roots of
$f_{1}\left(  \mu\right)  $ are
\begin{equation}
x_{1,2}=\frac{-2i\beta_{2}\pm\sqrt{D}}{-2i\left(  \beta_{1}-1\right)  }%
\end{equation}
where
\begin{equation}
D=\left(  2i\beta_{2}\right)  ^{2}+4i\left(  \beta_{1}-1\right)  a=\left(
2i\beta_{2}\right)  ^{2}-2\left(  \beta_{1}^{2}-1\right)  \left[  q\left(
0\right)  -q\left(  1\right)  \right]  \neq0.
\end{equation}
by the assumption (13) for $\sigma=1$. Therefore we have two different
solutions $x_{1}$ and $x_{2}$.

On the other hand the solutions of the equations $\mu\left(  \mu-2\pi
n\right)  =x_{1}$ and $\mu\left(  \mu-2\pi n\right)  =x_{2}$ with respect to
$\mu$\ are
\[
\mu_{11}=O\left(  \frac{1}{n}\right)  ,\text{ }\mu_{12}=2\pi n+\frac{x_{1}%
}{2\pi n}+O\left(  \frac{1}{n^{2}}\right)
\]
and
\[
\mu_{21}=O\left(  \frac{1}{n}\right)  ,\text{ }\mu_{22}=2\pi n+\frac{x_{2}%
}{2\pi n}+O\left(  \frac{1}{n^{2}}\right)
\]
respectively. Since $x_{1}-x_{2}\sim1$ (see (54) and (55)), we have
\begin{equation}
\mu_{12}-\mu_{21}\sim n,\text{ }\mu_{12}-\mu_{22}\sim\frac{1}{n},\text{ }%
\mu_{12}-\mu_{11}\sim n.
\end{equation}
Now consider the roots of $f_{2}\left(  \mu\right)  $ by using Rouche's
theorem on
\begin{equation}
\gamma_{j}\left(  r_{n}\right)  =\left\{  \mu:\text{ }\left\vert \mu-\mu
_{j2}\right\vert =r_{n}\right\}  ,
\end{equation}
for $j=1,2$, where $r_{n}$ is chosen so that
\begin{equation}
r_{n}=o\left(  \dfrac{1}{n}\right)  \text{ \& }\alpha_{n}=o\left(
nr_{n}\right)  .
\end{equation}
By (52), (53) and (58)
\[
\left\vert f_{1}\left(  \mu\right)  -f_{2}\left(  \mu\right)  \right\vert
=\alpha_{n}=o\left(  1\right)
\]
on $\gamma_{1}\left(  r_{n}\right)  \cap\gamma_{2}\left(  r_{n}\right)  $.
Since the roots of $f_{1}\left(  \mu\right)  $ are $\mu_{ij}$ for $i,j=1,2$,
we have
\begin{equation}
f_{1}\left(  \mu\right)  =A\left(  \mu-\mu_{11}\right)  \left(  \mu-\mu
_{12}\right)  \left(  \mu-\mu_{21}\right)  \left(  \mu-\mu_{22}\right)
\end{equation}
where $A$ is a constant. One can easily verify by using (56) and (59) that
\[
f^{\prime}\left(  \mu_{12}\right)  =A\left(  \mu_{12}-\mu_{11}\right)  \left(
\mu_{12}-\mu_{21}\right)  \left(  \mu_{12}-\mu_{22}\right)  \sim n
\]
Since $f\left(  \mu\right)  $ is a polynomial of order $4$ we have
\[
f^{\prime\prime}\left(  \mu_{12}\right)  =O(n^{2}),\text{ }f^{\prime
\prime\prime}\left(  \mu_{12}\right)  =O(n),\text{ }f^{\left(  4\right)
}\left(  \mu_{12}\right)  =O(1),\text{ }f^{\left(  5\right)  }\left(  \mu
_{12}\right)  =0.
\]
Therefore using the Taylor series
\[
f_{1}\left(  \mu\right)  =f_{1}^{\prime}\left(  \mu_{12}\right)  \left(
\mu-\mu_{12}\right)  +\cdots.
\]
of $f_{1}\left(  \mu\right)  $\ about $\mu_{12}$ for $\mu\in\gamma_{1}\left(
r_{n}\right)  $ and taking into account that $\left(  \mu-\mu_{12}\right)
\sim r_{n}$ we obtain
\[
\left\vert f_{1}\left(  \mu\right)  \right\vert \sim nr_{n}.
\]
On the other hand by (58) we have
\[
\left\vert f_{1}\left(  \mu\right)  -f_{2}\left(  \mu\right)  \right\vert
=\alpha_{n}=o\left(  nr_{n}\right)
\]
for $\mu\in\gamma_{1}\left(  r_{n}\right)  $. Therefore
\begin{equation}
\left\vert f_{1}\left(  \mu\right)  -f_{2}\left(  \mu\right)  \right\vert
<\left\vert f_{1}\left(  \mu\right)  \right\vert
\end{equation}
on $\gamma_{1}\left(  r_{n}\right)  $ In the same way we prove that (60) holds
on $\gamma_{2}\left(  r_{n}\right)  $ too. Hence inside of each of the circles
$\gamma_{1}\left(  r_{n}\right)  $ and $\gamma_{2}\left(  r_{n}\right)  $,
there is one root of (49) denoted by $\mu_{n,1}\left(  q\right)  $\ and
$\mu_{n,2}\left(  q\right)  $ respectively. Since $r_{n}=o\left(  \frac{1}%
{n}\right)  ,$ $\mu_{n,1}\left(  q\right)  $\ and $\mu_{n,2}\left(  q\right)
$ satisfy the formulas (47) and (48). To complete the proof of $\left(
a\right)  $ it is enough to note that disks enclosed by the circles
$\gamma_{1}\left(  r_{n}\right)  $ and $\gamma_{2}\left(  r_{n}\right)  $ have
no common points and there are only two roots of (46) in the neighborhood of
$2\pi n$. Thus $\left(  a\right)  $ is proved.

$\left(  b\right)  $ The proof of $\left(  b\right)  $ is the same as the
proof of Theorem 1$\left(  b\right)  $.
\end{proof}

Now consider $T_{1}^{0}\left(  q\right)  $. In this case the characteristic
determinant of $T_{1}^{0}\left(  0\right)  $ is
\[
\Delta_{0}^{0}\left(  \mu\right)  =\left(  1+e^{i\mu}\right)  \left(
i\mu+\beta_{1}i\mu e^{-i\mu}-\beta_{2}e^{-i\mu}\right)  +\left(  i\mu
+\beta_{1}i\mu e^{i\mu}+\beta_{2}e^{i\mu}\right)  \left(  1+e^{-i\mu}\right)
=0.
\]
After simplifying this equation, we have%
\[
\Delta_{0}^{0}\left(  \mu\right)  =\left(  1+e^{-i\mu}\right)  \left[
i\mu\left(  \beta_{1}+1\right)  \left(  e^{i\mu}+1\right)  +\beta_{2}\left(
e^{i\mu}-1\right)  \right]  =0.
\]
The roots of this equation has the form%
\[
\left(  2n+1\right)  \pi,\text{ }\left(  2n+1\right)  \pi+\frac{2\beta_{2}%
}{\beta_{1}+1}\frac{1}{\left(  2n+1\right)  \pi}+O\left(  \frac{1}{n^{2}%
}\right)  .
\]

The characteristic determinant of $T_{1}^{0}\left(  q\right)  $ can be written
in the forms%

\[
\Delta^{0}\left(  \mu\right)  =\Delta_{0}^{0}\left(  \mu\right)
+\frac{1-\beta_{1}}{2}e^{-i\mu}\left\{  c_{\mu}\left(  e^{2i\mu}-1\right)
-is_{\mu}\left(  e^{2i\mu}+1\right)  \right\}  +o\left(  \frac{1}{\mu}\right)
\]
and
\[
\Delta^{0}\left(  \mu\right)  =\Delta_{1}^{0}\left(  \mu\right)  +i\left(
\beta_{1}-1\right)  s_{\mu}\cos\mu+o\left(  \frac{1}{\mu}\right)  ,
\]
where
\[
\Delta_{1}^{0}\left(  \mu\right)  =\left(  1+e^{-i\mu}\right)  \left[
i\mu\left(  \beta_{1}+1\right)  \left(  e^{i\mu}+1\right)  +\left(  \beta
_{2}+\frac{1-\beta_{1}}{2}c_{\mu}\right)  \left(  e^{i\mu}-1\right)  \right]
.
\]

The investigation of $T_{1}^{0}\left(  q\right)  $ is similar to the
investigation of $T_{1}^{1}(q).$ The difference is that, here we consider the
functions and equations in $O\left(  \dfrac{1}{n}\right)  $-neighborhood of
$\left(  2n+1\right)  \pi$ (we denote it by $U(\left(  2n+1\right)  \pi)$)
instead of $U(2\pi n),$ since the eigenvalues of $T_{1}^{0}(0)$ lie in
$U(\left(  2n+1\right)  \pi)$ while the eigenvalues of $T_{1}^{1}(0)$ lie in
$U(2\pi n).$ Now instead of $\Delta_{0},$ $\Delta_{1},$ $\Delta$ using the
functions $\Delta_{0}^{0},$ $\Delta_{1}^{0},$ $\Delta^{0}$ and repeating the
proof of Theorem 1 we obtain:

\begin{theorem}
$\left(  a\right)  $ If (12a) holds, then the large eigenvalues of $T_{1}%
^{0}(q)$ are simple and the square roots (with nonnegative real part) of these
eigenvalues consist of two sequences $\{\mu_{n,1}^{0}\}$ and $\{\mu_{n,2}%
^{0}\}$ satisfying
\[
\mu_{n,1}^{0}=\text{ }\left(  2n+1\right)  \pi+o\left(  \frac{1}{n}\right)
\text{,}%
\]%
\[
\mu_{n,2}^{0}=\left(  2n+1\right)  \pi+\frac{2\beta_{2}}{\beta_{1}+1}\frac
{1}{\left(  2n+1\right)  \pi}+o\left(  \frac{1}{n}\right)  .
\]
Moreover the normalized eigenfunctions $\varphi_{n,1}^{0}\left(  x\right)  $
and $\varphi_{n,2}^{0}\left(  x\right)  $ corresponding to the eigenvalues
$\left(  \mu_{n,1}^{0}\right)  ^{2}$ and $\left(  \mu_{n,2}^{0}\right)  ^{2}%
$satisfy the same asymptotic formula
\[
\varphi_{n,j}^{0}\left(  x\right)  =\sqrt{2}\cos\left(  2n+1\right)  \pi
x+O\left(  \frac{1}{n}\right)  .
\]
for $j=1,2.$

$\left(  b\right)  $ If there exists a subsequence $\left\{  n_{k}\right\}  $
such that (12a) holds whenever $n$ is replaced by $n_{k}$, then the system of
the root functions of $T_{1}^{0}(q)$ does not form a Riesz basis.
\end{theorem}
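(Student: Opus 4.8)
The plan is to transcribe the argument of Theorem 1, replacing throughout the neighborhood $U(2\pi n)$ by $U((2n+1)\pi)$ and the triple $\Delta_0,\Delta_1,\Delta$ by the triple $\Delta_0^0,\Delta_1^0,\Delta^0$ displayed before the statement. The first step is the analogue of Lemma 1: I would show that the roots of $\Delta_1^0$ split into the zeros $\nu_{n,1}=(2n+1)\pi$ of $1+e^{-i\mu}$ and a second sequence $\nu_{n,2}=(2n+1)\pi+\frac{2\beta_2}{\beta_1+1}\frac{1}{(2n+1)\pi}+o(\frac1n)$. Factoring $1+e^{-i\mu}$ out of $\Delta_1^0$ and out of $\Delta_0^0$ reduces this to comparing the two resulting factors by Rouch\'e's theorem on a shrinking circle about the corresponding root of $\Delta_0^0$ (whose roots are listed before the theorem), exactly as in Lemma 1; only $c_\mu=o(1)$ is used here, so the condition (12a) is not yet needed.

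For part $(a)$ I would then use the decomposition $\Delta^0(\mu)=\Delta_1^0(\mu)+i(\beta_1-1)s_\mu\cos\mu+o(\frac1\mu)$. On $U((2n+1)\pi)$ one has $\cos\mu=-1+O(\frac1\mu)$, and assumption (12a) gives $s_\mu=o(\frac1n)$, so that $|\Delta^0(\mu)-\Delta_1^0(\mu)|<b_n$ with $b_n=o(\frac1n)$; I would pick $r_n=o(\frac1n)$ with $b_n=o(r_n)$ and set $\Gamma_j^0(r_n)=\{\mu:|\mu-\nu_{n,j}|=r_n\}$. The heart of the proof is the lower bound $\Delta_1^0(\mu)\sim r_n$ on both circles. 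Writing $\Delta_1^0(\mu)=(1+e^{-i\mu})\,i\mu\,h^0(\mu)$ with $h^0(\mu)=(\beta_1+1)(e^{i\mu}+1)+(\beta_2+\frac{1-\beta_1}{2}c_\mu)\frac{e^{i\mu}-1}{i\mu}$, the factor $1+e^{-i\mu}$ is of order $r_n$ on $\Gamma_1^0$ and of order $\frac1n$ on $\Gamma_2^0$, while Taylor-expanding $h^0$ about its root $\nu_{n,2}$ (where $h^{0\prime}\sim1$ because $\beta_1\neq-1$) makes $h^0$ of order $\frac1n$ on $\Gamma_1^0$ and of order $r_n$ on $\Gamma_2^0$; the product therefore keeps order $r_n$ on each circle. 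Rouch\'e's theorem then yields exactly one root of $\Delta^0$ inside each disk, which are the asymptotic formulas for $\mu_{n,1}^0$ and $\mu_{n,2}^0$, and simplicity follows since the two centres are $\sim\frac1n$ apart while $r_n=o(\frac1n)$.

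The eigenfunction formula would be obtained as in Theorem 1: inserting the fundamental solutions $e^{\pm i\mu x}+O(\frac1\mu)$ (see p. 52 of [18]) into the $2\times2$ determinant prescribed by the boundary conditions (7) with $\sigma=0$ and substituting $\mu_{n,j}^0=(2n+1)\pi+O(\frac1n)$ collapses the expression to $\varphi_{n,j}^0(x)=\sqrt2\cos((2n+1)\pi x)+O(\frac1n)$ for $j=1,2$. Part $(b)$ is then immediate: if (12a) holds along a subsequence $\{n_k\}$, this eigenfunction formula holds along $\{n_k\}$, so $\varphi_{n_k,1}^0$ and $\varphi_{n_k,2}^0$ both approach $\sqrt2\cos((2n_k+1)\pi x)$, the angle between them tends to zero, and the root functions cannot form a Riesz basis (see [20]).

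I expect the only genuinely delicate point to be the simultaneous estimate $\Delta_1^0(\mu)\sim r_n$ on both $\Gamma_1^0$ and $\Gamma_2^0$: one must verify that the two vanishing factors $1+e^{-i\mu}$ and $h^0$ trade orders between the circles so that their product has a uniform order $r_n$, together with the control $h^{0\prime}(\nu_{n,2})\sim1$ and $h^{0(k)}\sim1$ needed for the Taylor expansion. This is the exact counterpart of estimate (45) in the proof of Theorem 1, and everything else is a routine change of $2\pi n$ into $(2n+1)\pi$.
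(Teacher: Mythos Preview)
Your proposal is correct and follows exactly the approach the paper itself prescribes: the paper's entire proof of this theorem is the single instruction to repeat the proof of Theorem~1 with $\Delta_0^0,\Delta_1^0,\Delta^0$ in place of $\Delta_0,\Delta_1,\Delta$ and $U((2n+1)\pi)$ in place of $U(2\pi n)$, which is precisely what you have carried out in detail. Your identification of the key estimate $\Delta_1^0(\mu)\sim r_n$ on both circles as the only delicate point, and your justification via the trade-off between the orders of $1+e^{-i\mu}$ and $h^0$ (using $\beta_1\neq-1$ from (8)), mirrors estimate (45) in Theorem~1 exactly as intended.
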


Now we investigate $T_{1}^{0}(q),$ when $q$ is an absolutely continuous
function. The analogous formula to (46) is%

\begin{equation}
\Delta^{0}\left(  \mu\right)  =\Delta_{0}^{0}\left(  \mu\right)  +\frac{b}%
{\mu}+o\left(  \frac{1}{\mu}\right)  =0,
\end{equation}
where
\[
b=\frac{i\left(  1-\beta_{1}\right)  }{2}\left[  q\left(  0\right)  +q\left(
1\right)  \right]  .
\]

Instead of (46) using (61) and repeating the proof of Theorem 2, we obtain:

\begin{theorem}
Let $q$ be an absolutely continuous function and (13) for $\sigma=0$ hold.

$\left(  a\right)  $ The large eigenvalues of $T_{1}^{0}(q)$ are simple and
the square roots (with nonnegative real part) of these eigenvalues consist of
two sequences $\{\mu_{n,1}^{0}\}$ and $\{\mu_{n,2}^{0}\}$ satisfying
\[
\mu_{n,1}^{0}=\left(  2n+1\right)  \pi+\frac{2\beta_{2}-i\sqrt{D_{2}}%
}{2\left(  \beta_{1}+1\right)  \left(  2n+1\right)  \pi}+o\left(  \frac{1}%
{n}\right)  ,
\]%
\[
\mu_{n,2}^{0}=\left(  2n+1\right)  \pi+\frac{2\beta_{2}+i\sqrt{D_{2}}%
}{2\left(  \beta_{1}+1\right)  \left(  2n+1\right)  \pi}+o\left(  \frac{1}%
{n}\right)  ,
\]
where $D_{2}=2\left(  1-\beta_{1}^{2}\right)  \left[  q\left(  0\right)
+q\left(  1\right)  \right]  -\left(  2\beta_{2}\right)  ^{2}.$

$\left(  b\right)  $ The system of the root functions of $T_{1}^{0}(q)$ does
not form a Riesz basis.
\end{theorem}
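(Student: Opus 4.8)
The plan is to follow the proof of Theorem 2 essentially verbatim, substituting $\Delta_0^0$ for $\Delta_0$, the constant $b$ for $a$, the point $(2n+1)\pi$ for $2\pi n$, and the factor $\beta_1+1$ for $\beta_1-1$. Starting from (61), each large eigenvalue square root $\mu_{n,j}^0$ is a root of $\mu\Delta_0^0(\mu)+b+o(1)=0$. I would insert the explicit form $\Delta_0^0(\mu)=(1+e^{-i\mu})[i\mu(\beta_1+1)(e^{i\mu}+1)+\beta_2(e^{i\mu}-1)]$ and Taylor-expand $e^{\pm i\mu}$ about $(2n+1)\pi$ for $\mu\in U((2n+1)\pi)$, using $e^{\pm i(2n+1)\pi}=-1$, so that $1+e^{-i\mu}=i(\mu-(2n+1)\pi)+O(n^{-2})$, $e^{i\mu}+1=-i(\mu-(2n+1)\pi)+O(n^{-2})$ and $e^{i\mu}-1=-2-i(\mu-(2n+1)\pi)+O(n^{-2})$.

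After this substitution the equation collapses, exactly as (49) became (50), to a relation in the single variable $x=\mu(\mu-(2n+1)\pi)$, which is $O(1)$ on $U((2n+1)\pi)$. Collecting terms I expect the quadratic $i(\beta_1+1)x^2-2i\beta_2 x+b+o(1)=0$, the analog of (51). Its discriminant is $(-2i\beta_2)^2-4i(\beta_1+1)b=-4\beta_2^2+2(1-\beta_1^2)[q(0)+q(1)]=D_2$, and the hypothesis (13) for $\sigma=0$ is precisely the statement $q(0)+q(1)\neq 2\beta_2^2/(1-\beta_1^2)$, i.e. $D_2\neq0$. Hence the limiting quadratic has two distinct roots $x_{1,2}=(2i\beta_2\pm\sqrt{D_2})/(2i(\beta_1+1))$; solving $\mu(\mu-(2n+1)\pi)=x_j$ yields $\mu=(2n+1)\pi+x_j/((2n+1)\pi)+O(n^{-2})$, which after clearing the factor $i$ reproduces the two claimed formulas with numerators $2\beta_2\mp i\sqrt{D_2}$.

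To convert these formal roots into genuine eigenvalues I would run the same Rouch\'e comparison as in Theorem 2: introduce the quartic $f_1$ obtained from the exact left-hand side and the perturbed $f_2=f_1+\alpha_n$ with $\alpha_n=o(1)$, factor $f_1$ through its four roots, and use the separation estimates (the analog of (56)) to get $|f_1(\mu)|\sim nr_n$ on circles $\gamma_j(r_n)$ of radius $r_n=o(1/n)$ about the two approximate roots, while $|f_1-f_2|=\alpha_n=o(nr_n)$. Rouch\'e then places exactly one eigenvalue in each disk, giving the $\sigma=0$ asymptotic formulas; since the two centres are separated by order $1/n$ while $r_n=o(1/n)$, the disks are disjoint and the eigenvalues are simple. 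This establishes part $(a)$.

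For part $(b)$ I would argue exactly as in Theorem 1$(b)$: the eigenfunctions attached to $\mu_{n,1}^0$ and $\mu_{n,2}^0$ both obey the common leading form $\sqrt{2}\cos((2n+1)\pi x)+O(1/n)$, so the angle between the two members of each pair tends to zero and, by Shkalikov [20], the root functions cannot form a Riesz basis; alternatively this follows from part $(a)$ together with Theorem 2 of [12,13]. Since every algebraic and analytic step is a transcription of the already-proved Theorem 2, I expect no genuinely new obstacle; the only point demanding care is bookkeeping the sign changes ($\beta_1-1\leadsto\beta_1+1$ and $a\leadsto b$) so that the discriminant comes out as $D_2$ rather than $D$ and matches (13) for $\sigma=0$ exactly.
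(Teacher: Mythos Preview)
Your proposal is correct and follows exactly the route the paper itself takes: the paper's proof of this theorem consists of the single sentence ``Instead of (46) using (61) and repeating the proof of Theorem 2, we obtain,'' and you have carried out precisely that transcription, with the correct bookkeeping on the sign changes ($\beta_1-1\mapsto\beta_1+1$, $a\mapsto b$, $2\pi n\mapsto(2n+1)\pi$) so that the discriminant comes out as $D_2$ and matches condition (13) for $\sigma=0$.
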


Now we consider $T_{2}^{1}\left(  q\right)  $. In this case the characteristic
determinant of $T_{2}^{1}\left(  0\right)  $ is
\begin{equation}
D_{0}^{1}\left(  \mu\right)  =\left(  1-e^{i\mu}\right)  \left(  \beta_{3}%
i\mu+i\mu e^{-i\mu}-\beta_{4}e^{-i\mu}\right)  +\left(  \beta_{3}i\mu+i\mu
e^{i\mu}+\beta_{4}e^{i\mu}\right)  \left(  1-e^{-i\mu}\right)  =0.\nonumber
\end{equation}
After simplifying this equation, we have%
\[
D_{0}^{1}\left(  \mu\right)  =\left(  1-e^{-i\mu}\right)  \left[  i\mu\left(
1-\beta_{3}\right)  \left(  e^{i\mu}-1\right)  +\beta_{4}\left(  e^{i\mu
}+1\right)  \right]  =0.
\]
The roots of this equation has the form
\[
2\pi n,\text{ }2\pi n+\frac{\beta_{4}}{1-\beta_{3}}\frac{1}{\pi n}+O\left(
\frac{1}{n^{2}}\right)  .
\]
The characteristic determinant of $T_{2}^{1}\left(  q\right)  $ can be written
in the forms
\[
D^{1}\left(  \mu\right)  =D_{0}^{1}\left(  \mu\right)  -\frac{\beta_{3}+1}%
{2}e^{-i\mu}\left\{  c_{\mu}\left(  e^{2i\mu}-1\right)  -is_{\mu}\left(
e^{2i\mu}+1\right)  \right\}  +o\left(  \frac{1}{\mu}\right)
\]
and
\[
D^{1}\left(  \mu\right)  =D_{1}^{1}\left(  \mu\right)  +i\left(  \beta
_{3}+1\right)  s_{\mu}\cos\mu+o\left(  \frac{1}{\mu}\right)  ,
\]
where
\[
D_{1}^{1}\left(  \mu\right)  =\left(  1-e^{-i\mu}\right)  \left[  i\mu\left(
1-\beta_{3}\right)  \left(  e^{i\mu}-1\right)  +\left(  \beta_{4}-\frac
{\beta_{3}+1}{2}c_{\mu}\right)  \left(  e^{i\mu}+1\right)  \right]  .
\]

Instead of $\Delta_{0},$ $\Delta_{1},$ $\Delta$ using the functions $D_{0}%
^{1},$ $D_{1}^{1},$ $D^{1}$ and repeating the proof of Theorem 1 we obtain:

\begin{theorem}
$\left(  a\right)  $ If (12) holds, then the large eigenvalues of $T_{2}%
^{1}(q)$ are simple and the square roots (with nonnegative real part) of these
eigenvalues consist of two sequences $\{\rho_{n,1}\}$ and $\{\rho_{n,2}\}$
satisfying
\[
\rho_{n,1}=2\pi n+o\left(  \frac{1}{n}\right)  \text{,}%
\]%
\[
\rho_{n,2}=2\pi n+\frac{\beta_{4}}{1-\beta_{3}}\frac{1}{\pi n}+o\left(
\frac{1}{n}\right)  .
\]
Moreover the normalized eigenfunctions $\phi_{n,1}\left(  x\right)  $ and
$\phi_{n,2}\left(  x\right)  $ corresponding to the eigenvalues $\left(
\rho_{n,1}\right)  ^{2}$ and $\left(  \rho_{n,2}\right)  ^{2}$satisfy the same
asymptotic formula
\[
\phi_{n,j}\left(  x\right)  =\sqrt{2}\cos2\pi nx+O\left(  \frac{1}{n}\right)
\]
for $j=1,2$

$\left(  b\right)  $ If there exists a subsequence $\left\{  n_{k}\right\}  $
such that (12) holds whenever $n$ is replaced by $n_{k}$, then the system of
the root functions of $T_{2}^{1}(q)$ does not form a Riesz basis.
\end{theorem}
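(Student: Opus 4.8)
The plan is to transcribe the proof of Theorem 1 essentially verbatim, replacing the determinant triple $(\Delta_0,\Delta_1,\Delta)$ by $(D_0^1,D_1^1,D^1)$ and tracking the coefficient dictionary $\beta_1-1\mapsto 1-\beta_3$, $\beta_2\mapsto\beta_4$, and $\beta_1+1\mapsto\beta_3+1$. Since we are in the general regular case with $\sigma=1$, the constraint (10) gives $\beta_3\neq 1$ while the general-case hypothesis gives $\beta_3\neq-1$; hence both $1-\beta_3$ and $\beta_3+1$ are nonzero, which is precisely what lets the two factors of $D_1^1(\mu)=(1-e^{-i\mu})\bigl[i\mu(1-\beta_3)(e^{i\mu}-1)+(\beta_4-\tfrac{\beta_3+1}{2}c_\mu)(e^{i\mu}+1)\bigr]$ play the same roles that the corresponding factors of $\Delta_1$ played in Lemma 1.

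First I would establish the analogue of Lemma 1: the roots of $D_1^1$ consist of $\mu_{n,1}^1=2\pi n$ (from $1-e^{-i\mu}=0$) together with a second family $\mu_{n,2}^1=2\pi n+\frac{\beta_4}{1-\beta_3}\frac{1}{\pi n}+o(\frac{1}{n})$. As in Lemma 1 this follows from $c_\mu=o(1)$, which makes the second factor of $D_1^1$ differ from its $q=0$ version (whose second root is the stated $2\pi n+\frac{\beta_4}{1-\beta_3}\frac{1}{\pi n}+O(\frac{1}{n^2})$) by $o(\frac{1}{n})$ on a neighborhood of $2\pi n$, so a Rouche comparison on a shrinking circle locates $\mu_{n,2}^1$.

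Next, on the circles $\Gamma_j(r_n)=\{\mu:|\mu-\mu_{n,j}^1|=r_n\}$ with $r_n=o(\frac{1}{n})$, hypothesis (12) gives $s_\mu=o(\frac{1}{n})$, so the representation $D^1=D_1^1+i(\beta_3+1)s_\mu\cos\mu+o(\frac{1}{\mu})$ yields $|D^1(\mu)-D_1^1(\mu)|=o(\frac{1}{n})$. Choosing $r_n$ so that this gap is $o(r_n)$, and estimating $D_1^1\sim r_n$ on each $\Gamma_j(r_n)$ through the factorization $D_1^1=(1-e^{-i\mu})i\mu h(\mu)$ and the Taylor expansion of $h$ about $\mu_{n,2}^1$, Rouche's theorem places exactly one eigenvalue inside each disk, giving $\rho_{n,1}$ and $\rho_{n,2}$. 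Simplicity is immediate, since the centres are $\sim\frac{1}{n}$ apart while $r_n=o(\frac{1}{n})$. The eigenfunction asymptotic is then obtained by writing $\phi_{n,j}$ as the $2\times 2$ determinant formed from the fundamental solutions $e^{\pm i\mu x}+O(\frac{1}{\mu})$ and the two boundary rows of $T_2^1$, and substituting $\rho_{n,j}=2\pi n+O(\frac{1}{n})$, whose leading part collapses to $\sqrt{2}\cos 2\pi nx$.

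For part (b), if (12) holds along a subsequence $\{n_k\}$ then the common leading term forces the angle between $\phi_{n_k,1}$ and $\phi_{n_k,2}$ to zero, so by [20] the root functions cannot form a Riesz basis. I expect no genuine obstacle beyond bookkeeping; the only point needing care is confirming that the coefficient substitutions preserve the non-vanishing on which the Rouche estimates rely, namely $1-\beta_3\neq0$ so that the term $i\mu(1-\beta_3)(e^{i\mu}-1)$ genuinely drives $h(\mu)$, and $\beta_3+1\neq0$ so that the perturbation of $D_1^1$ is governed exactly by $s_\mu$ as in Theorem 1.
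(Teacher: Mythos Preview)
Your proposal is correct and follows precisely the approach the paper itself takes: the paper's proof of this theorem is literally the single sentence ``instead of $\Delta_{0},\Delta_{1},\Delta$ using the functions $D_{0}^{1},D_{1}^{1},D^{1}$ and repeating the proof of Theorem 1 we obtain,'' and you have accurately identified the coefficient dictionary and carried the argument through. One small remark: your final comment that $\beta_{3}+1\neq0$ is needed ``so that the perturbation of $D_{1}^{1}$ is governed exactly by $s_{\mu}$'' is not strictly necessary for the argument---if $\beta_{3}+1=0$ the $s_{\mu}$ term simply vanishes and the Rouche comparison becomes easier---though of course $\beta_{3}\neq-1$ does hold in the general case under consideration.
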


Let $q$ be an absolutely continuous function. Then analogous formula to (46) is%

\begin{equation}
D^{1}\left(  \mu\right)  =D_{0}^{1}\left(  \mu\right)  +\frac{c}{\mu}+o\left(
\frac{1}{\mu}\right)  =0,
\end{equation}
where
\[
c=\frac{i\left(  \beta_{3}+1\right)  }{2}\left[  q\left(  0\right)  -q\left(
1\right)  \right]  .
\]

Now instead of (46) using (62) and repeating the proof of Theorem 2, we obtain:

\begin{theorem}
Let $q$ be an absolutely continuous function and (14) for $\sigma=1$ hold. Then

$\left(  a\right)  $ the large eigenvalues of $T_{2}^{1}(q)$ are simple and
the square roots (with nonnegative real part) of these eigenvalues consist of
two sequences $\{\rho_{n,1}\}$ and $\{\rho_{n,2}\}$ satisfying
\[
\rho_{n,1}=2\pi n+\frac{-2\beta_{4}-i\sqrt{D_{3}}}{4\left(  \beta
_{3}-1\right)  \pi n}+o\left(  \frac{1}{n}\right)  ,
\]%
\[
\rho_{n,2}=2\pi n+\frac{-2\beta_{4}+i\sqrt{D_{3}}}{4\left(  \beta
_{3}-1\right)  \pi n}+o\left(  \frac{1}{n}\right)  ,
\]
where $D_{3}=2\left(  \beta_{3}^{2}-1\right)  \left[  q\left(  0\right)
-q\left(  1\right)  \right]  -\left(  2\beta_{4}\right)  ^{2}.$

$\left(  b\right)  $ the system of the root functions of $T_{2}^{1}(q)$ does
not form a Riesz basis.
\end{theorem}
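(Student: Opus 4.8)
The plan is to run the proof of Theorem 2 verbatim, replacing the triple $(\Delta_{0},\Delta,a)$ by $(D_{0}^{1},D^{1},c)$ and the parameter pair $(\beta_{1}-1,\beta_{2})$ by $(1-\beta_{3},\beta_{4})$. Since the unperturbed roots of $D_{0}^{1}$ cluster near $2\pi n$ (as computed just before the statement), all estimates are carried out on $U(2\pi n)$. Starting from (62), each $\rho_{n,j}$ is a root of $\mu D_{0}^{1}(\mu)+c+o(1)=0$. Inserting the explicit form of $D_{0}^{1}$ and using $e^{\mp i\mu}=1\mp i(\mu-2\pi n)+O(1/n^{2})$ on $U(2\pi n)$ turns this into
\[
i\mu(\mu-2\pi n)\left[-\mu(1-\beta_{3})(\mu-2\pi n)+2\beta_{4}+O\!\left(\tfrac{1}{\mu}\right)\right]+c+o(1)=0 .
\]
Substituting $x=\mu(\mu-2\pi n)=O(1)$ reduces this to the quadratic
\[
-i(1-\beta_{3})x^{2}+2i\beta_{4}x+c+o(1)=0 .
\]

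The one genuinely new step is the discriminant identity. For the limiting quadratic $f_{1}(x)=-i(1-\beta_{3})x^{2}+2i\beta_{4}x+c$, using $c=\tfrac{i(\beta_{3}+1)}{2}[q(0)-q(1)]$ I would compute
\[
(2i\beta_{4})^{2}+4i(1-\beta_{3})c=-(2\beta_{4})^{2}+2(\beta_{3}^{2}-1)[q(0)-q(1)]=D_{3},
\]
and observe that hypothesis (14) for $\sigma=1$ is exactly $D_{3}\neq0$. Hence $f_{1}$ has two distinct roots $x_{1,2}=\dfrac{-2\beta_{4}\mp i\sqrt{D_{3}}}{2(\beta_{3}-1)}$; solving $\mu(\mu-2\pi n)=x_{j}$ near $2\pi n$ gives $\rho_{n,j}=2\pi n+\dfrac{x_{j}}{2\pi n}+O(1/n^{2})$, which is precisely the pair of asymptotic formulas in the statement.

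To pass from roots of the limiting quadratic to roots of the true equation, and to obtain simplicity, I would compare $f_{1}$ with the perturbed polynomial $f_{2}=f_{1}+\alpha_{n}$, $\alpha_{n}=o(1)$, by Rouch\'{e}'s theorem on circles $\gamma_{j}(r_{n})$ of radius $r_{n}=o(1/n)$ centered at the approximate roots, with $r_{n}$ chosen so that $\alpha_{n}=o(nr_{n})$. Because $x_{1}-x_{2}\sim1$, the four $\mu$-preimages are separated at scale $\sim1/n$, so $|f_{1}|\sim nr_{n}$ dominates $|f_{1}-f_{2}|=\alpha_{n}$ on each $\gamma_{j}(r_{n})$; Rouch\'{e} then places exactly one true root in each disk, and disjointness of the two disks yields simplicity. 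This Rouch\'{e} estimate is the step requiring the most care, but it is structurally identical to the one already carried out for $f_{1},f_{2}$ in the proof of Theorem 2, so no new obstacle arises.

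For part $(b)$, since $\rho_{n,1},\rho_{n,2}=2\pi n+O(1/n)$, the eigenfunction computation of Theorem 5$(a)$ applies unchanged and gives $\phi_{n,1}(x),\phi_{n,2}(x)=\sqrt{2}\cos2\pi nx+O(1/n)$ for both members of each pair. Consequently the angle between $\phi_{n,1}$ and $\phi_{n,2}$ tends to zero, and the system of root functions of $T_{2}^{1}(q)$ cannot form a Riesz basis, exactly as in the proof of Theorem 1$(b)$. The only content beyond Theorem 2 is the discriminant identity above; everything else transfers verbatim.
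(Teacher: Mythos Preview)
Your proposal is correct and follows exactly the approach the paper takes: the paper's entire proof of this theorem is the single sentence ``instead of (46) using (62) and repeating the proof of Theorem 2, we obtain,'' and you have carried out precisely that substitution, including the discriminant computation $D_{3}=(2i\beta_{4})^{2}+4i(1-\beta_{3})c$ that the paper leaves implicit. If anything, your write-up is more detailed than the paper's own.
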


Finally, we consider $T_{2}^{0}\left(  q\right)  $. In this case the
characteristic determinant of $T_{2}^{0}\left(  0\right)  $ is
\begin{equation}
D_{0}^{0}\left(  \mu\right)  =\left(  1+e^{i\mu}\right)  \left(  \beta_{3}%
i\mu+i\mu e^{-i\mu}-\beta_{4}e^{-i\mu}\right)  +\left(  \beta_{3}i\mu+i\mu
e^{i\mu}+\beta_{4}e^{i\mu}\right)  \left(  1+e^{-i\mu}\right)  =0.\nonumber
\end{equation}
After simplifying this equation, we have%
\[
D_{0}^{0}\left(  \mu\right)  =\left(  1+e^{-i\mu}\right)  \left[  i\mu\left(
1+\beta_{3}\right)  \left(  e^{i\mu}+1\right)  +\beta_{4}\left(  e^{i\mu
}-1\right)  \right]  =0.
\]
The roots of this equation has the form
\[
\left(  2n+1\right)  \pi,\text{ }\left(  2n+1\right)  \pi+\frac{2\beta_{4}%
}{\beta_{3}+1}\frac{1}{\left(  2n+1\right)  \pi}+O\left(  \frac{1}{n^{2}%
}\right)  .
\]

The characteristic determinant of $T_{2}^{1}\left(  q\right)  $ can be written
in the forms%

\[
D^{0}\left(  \mu\right)  =D_{0}^{0}\left(  \mu\right)  +\frac{\beta_{3}-1}%
{2}e^{-i\mu}\left\{  c_{\mu}\left(  e^{2i\mu}-1\right)  -is_{\mu}\left(
e^{2i\mu}+1\right)  \right\}  +o\left(  \frac{1}{\mu}\right)
\]
and
\[
D^{0}\left(  \mu\right)  =D_{1}^{0}\left(  \mu\right)  +i\left(  1-\beta
_{3}\right)  s_{\mu}\cos\mu+o\left(  \frac{1}{\mu}\right)  ,
\]
where
\[
D_{1}^{0}\left(  \mu\right)  =\left(  1+e^{-i\mu}\right)  \left[  i\mu\left(
1+\beta_{3}\right)  \left(  e^{i\mu}+1\right)  +\left(  \beta_{4}+\frac
{\beta_{3}-1}{2}c_{\mu}\right)  \left(  e^{i\mu}-1\right)  \right]  .
\]

Instead of $\Delta_{0},$ $\Delta_{1},$ $\Delta$ using the functions $D_{0}%
^{0},D_{1}^{0},D^{0}$ and repeating the proof of Theorem 1 we obtain:

\begin{theorem}
$\left(  a\right)  $ If (12a) holds, then the large eigenvalues of $T_{2}%
^{0}(q)$ are simple and the square roots (with nonnegative real part) of these
eigenvalues consist of two sequences $\{\rho_{n,1}^{0}\}$ and $\{\rho
_{n,2}^{0}\}$ satisfying
\[
\rho_{n,1}^{0}=\left(  2n+1\right)  \pi+o\left(  \frac{1}{n}\right)  \text{,}%
\]%
\[
\rho_{n,2}^{0}=\left(  2n+1\right)  \pi+\frac{2\beta_{4}}{\beta_{3}+1}\frac
{1}{\left(  2n+1\right)  \pi}+o\left(  \frac{1}{n}\right)  .
\]
Moreover the normalized eigenfunctions $\phi_{n,1}^{0}\left(  x\right)  $ and
$\phi_{n,2}^{0}\left(  x\right)  $ corresponding to the eigenvalues $\left(
\rho_{n,1}^{0}\right)  ^{2}$ and $\left(  \rho_{n,2}^{0}\right)  ^{2}$satisfy
the same asymptotic formula
\[
\phi_{n,j}^{0}\left(  x\right)  =\sqrt{2}\cos\left(  2n+1\right)  \pi
x+O\left(  \frac{1}{n}\right)
\]
for $j=1,2.$

$\left(  b\right)  $ If there exists a subsequence $\left\{  n_{k}\right\}  $
such that (12a) holds whenever $n$ is replaced by $n_{k}$, then the system of
the root functions of $T_{2}^{0}(q)$ does not form a Riesz basis.
\end{theorem}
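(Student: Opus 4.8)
$(a)$ The plan is to repeat the proof of Theorem 1 verbatim, replacing $\Delta_0,\Delta_1,\Delta$ by $D_0^0,D_1^0,D^0$ and the neighborhood $U(2\pi n)$ by $U((2n+1)\pi)$, since the eigenvalues of $T_2^0(0)$ lie in $U((2n+1)\pi)$. First I would establish the analogue of Lemma 1 for $D_1^0$. Factoring $D_1^0(\mu)=(1+e^{-i\mu})\,i\mu\,h^0(\mu)$ with $h^0(\mu)=(1+\beta_3)(e^{i\mu}+1)+\left(\beta_4+\frac{\beta_3-1}{2}c_\mu\right)\frac{e^{i\mu}-1}{i\mu}$, the zeros split into those of $1+e^{-i\mu}$, namely $\nu_{n,1}=(2n+1)\pi$, and those of $h^0$. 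Since $c_\mu=o(1)$, a Rouch\'e comparison of $h^0$ with its $q\equiv0$ counterpart on a circle of radius $\varepsilon_n/n$ ($\varepsilon_n=o(1)$) about $(2n+1)\pi+\frac{2\beta_4}{\beta_3+1}\frac{1}{(2n+1)\pi}$ produces a zero $\nu_{n,2}=(2n+1)\pi+\frac{2\beta_4}{\beta_3+1}\frac{1}{(2n+1)\pi}+o(\frac{1}{n})$, just as in the proof of Lemma 1.

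Next I would locate the roots of $D^0$ by Rouch\'e's theorem applied to $D^0$ and $D_1^0$ on the circles $\Gamma_j(r_n)=\{\mu:\left\vert\mu-\nu_{n,j}\right\vert=r_n\}$ with $r_n=o(\frac{1}{n})$. On these circles hypothesis (12a) yields $s_\mu=o(\frac{1}{n})$, so the formula $D^0(\mu)=D_1^0(\mu)+i(1-\beta_3)s_\mu\cos\mu+o(\frac{1}{\mu})$ gives $\left\vert D^0-D_1^0\right\vert<b_n=o(\frac{1}{n})$, and $r_n$ is chosen with $b_n=o(r_n)$. The essential step is the lower bound $\left\vert D_1^0(\mu)\right\vert\sim r_n$ on both circles. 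Exactly as in $(42)$--$(45)$, on $\Gamma_1(r_n)$ one has $(1+e^{-i\mu})\sim r_n$ while $h^0(\mu)\sim\frac{1}{n}$ (since $(2n+1)\pi$ is not a zero of $h^0$), whereas on $\Gamma_2(r_n)$ one has $(1+e^{-i\mu})\sim\frac{1}{n}$ while $h^0(\mu)\sim r_n$ by the Taylor expansion of $h^0$ about its zero $\nu_{n,2}$ together with $(h^0)'\sim1$; in both cases $D_1^0=(1+e^{-i\mu})\,i\mu\,h^0\sim r_n$. Rouch\'e then places exactly one root of $D^0$ in each disk, proving the two asymptotic formulas; since the centres $\nu_{n,1}$ and $\nu_{n,2}$ are $\sim\frac{1}{n}$ apart while $r_n=o(\frac{1}{n})$, the two roots are distinct and the large eigenvalues are simple.

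For the eigenfunctions I would proceed as in Theorem 1, forming $\phi_{n,j}^0$ from the fundamental solutions $y_1(x,\mu)=e^{i\mu x}+O(\frac{1}{\mu})$ and $y_2(x,\mu)=e^{-i\mu x}+O(\frac{1}{\mu})$ as the $2\times2$ determinant built from the two boundary functionals of $(9)$ with $\sigma=0$, and then substituting $\rho_{n,j}^0=(2n+1)\pi+O(\frac{1}{n})$; the leading term collapses to $\sqrt{2}\cos((2n+1)\pi x)$, which is the stated formula. For $(b)$, if (12a) holds along a subsequence $\{n_k\}$ then both $\phi_{n_k,1}^0$ and $\phi_{n_k,2}^0$ obey the same asymptotics $\sqrt{2}\cos((2n_k+1)\pi x)+O(\frac{1}{n_k})$, so the angle between them tends to zero and the root functions cannot form a Riesz basis (see [20]); alternatively $(b)$ follows from $(a)$ and Theorem 2 of [12, 13].

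The main obstacle is the lower estimate $\left\vert D_1^0(\mu)\right\vert\sim r_n$: the two factors $1+e^{-i\mu}$ and $h^0(\mu)$ exchange roles between $\Gamma_1(r_n)$ and $\Gamma_2(r_n)$, and because the two centres are only $O(\frac{1}{n})$ apart one must take $r_n=o(\frac{1}{n})$ and control $h^0$ through its Taylor expansion about $\nu_{n,2}$, checking $(h^0)'\sim1$ and $(h^0)^{(k)}\sim1$. Apart from this, the argument is a routine transcription of the $T_1^1(q)$ proof, with the values of $e^{\pm i\mu}$ at $(2n+1)\pi$ (where $e^{i\mu}=-1$) replacing those at $2\pi n$.
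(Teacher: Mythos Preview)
Your proposal is correct and follows essentially the same approach as the paper: the paper's own proof of this theorem consists solely of the sentence ``Instead of $\Delta_{0},\Delta_{1},\Delta$ using the functions $D_{0}^{0},D_{1}^{0},D^{0}$ and repeating the proof of Theorem~1 we obtain,'' and your outline is precisely that transcription, including the analogue of Lemma~1, the Rouch\'e comparison of $D^{0}$ with $D_{1}^{0}$ on the circles $\Gamma_{j}(r_{n})$, the factorization $D_{1}^{0}=(1+e^{-i\mu})\,i\mu\,h^{0}$ with the key estimate $|D_{1}^{0}|\sim r_{n}$, and the eigenfunction and angle arguments for~(b).
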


Let $q$ be an absolutely continuous function. Then analogous formula to (46)
is
\begin{equation}
D^{0}\left(  \mu\right)  =D_{0}^{0}\left(  \mu\right)  +\frac{d}{\mu}+o\left(
\frac{1}{\mu}\right)  =0,
\end{equation}
where
\[
d=\frac{i\left(  \beta_{3}-1\right)  }{2}\left[  q\left(  0\right)  +q\left(
1\right)  \right]  .
\]

Now instead of (46) using (63) and repeating the proof of Theorem 2, we obtain:

\begin{theorem}
Let $q$ be an absolutely continuous function and (14) for $\sigma=0$ hold. Then

$\left(  a\right)  $ the large eigenvalues of $T_{2}^{0}(q)$ are simple and
the square roots (with nonnegative real part) of these eigenvalues consist of
two sequences $\{\rho_{n,1}^{0}\}$ and $\{\rho_{n,2}^{0}\}$ satisfying
\[
\rho_{n,1}^{0}=\left(  2n+1\right)  \pi+\frac{2\beta_{4}-i\sqrt{D_{4}}%
}{2\left(  \beta_{3}+1\right)  \left(  2n+1\right)  \pi}+o\left(  \frac{1}%
{n}\right)  ,
\]%
\[
\rho_{n,2}^{0}=\left(  2n+1\right)  \pi+\frac{2\beta_{4}+i\sqrt{D_{4}}%
}{2\left(  \beta_{3}+1\right)  \left(  2n+1\right)  \pi}+o\left(  \frac{1}%
{n}\right)  ,
\]
where $D_{4}=2\left(  \beta_{3}^{2}-1\right)  \left[  q\left(  0\right)
+q\left(  1\right)  \right]  -\left(  2\beta_{4}\right)  ^{2}.$

$\left(  b\right)  $ the system of the root functions of $T_{2}^{0}(q)$ does
not form a Riesz basis.
\end{theorem}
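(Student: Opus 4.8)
The plan is to follow the proof of Theorem~2 line for line, with every object replaced by its $T_2^0$-counterpart: one works in the neighborhood $U((2n+1)\pi)$ of $(2n+1)\pi$ rather than $U(2\pi n)$, and uses $D^0,D_0^0,D_1^0$ in place of $\Delta,\Delta_0,\Delta_1$. The starting point is formula (63), which states that every large root $\rho^0_{n,j}$ of $D^0$ is a root of
\[
\mu D_0^0(\mu)+d+o(1)=0,\qquad d=\tfrac{i(\beta_3-1)}{2}\bigl[q(0)+q(1)\bigr].
\]
Inserting the explicit expression $D_0^0(\mu)=(1+e^{-i\mu})\bigl[\,i\mu(1+\beta_3)(e^{i\mu}+1)+\beta_4(e^{i\mu}-1)\,\bigr]$ reduces this to a concrete transcendental equation to be localized near $(2n+1)\pi$.

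The key reduction, mirroring the passage from (49) to (51), rests on the Taylor expansions of $e^{\pm i\mu}$ about $(2n+1)\pi$. Because $e^{\pm i(2n+1)\pi}=-1$, for $\mu\in U((2n+1)\pi)$ one gets $1+e^{-i\mu}=i(\mu-(2n+1)\pi)+O(1/n^2)$, $e^{i\mu}+1=-i(\mu-(2n+1)\pi)+O(1/n^2)$ and $e^{i\mu}-1=-2+O(1/n)$. Substituting these and setting $x=\mu(\mu-(2n+1)\pi)$, which is $O(1)$ on $U((2n+1)\pi)$, I expect the equation to collapse to the perturbed quadratic
\[
i(1+\beta_3)x^2-2i\beta_4\,x+d+o(1)=0.
\]

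I then compare, exactly as in (52)--(60), the leading polynomial $f_1(x)=i(1+\beta_3)x^2-2i\beta_4 x+d$ with its $o(1)$-perturbation by Rouch\'e's theorem. A direct computation gives for the discriminant
\[
(2i\beta_4)^2-4i(1+\beta_3)d=2(\beta_3^2-1)\bigl[q(0)+q(1)\bigr]-(2\beta_4)^2=D_4,
\]
and hypothesis (14) for $\sigma=0$ is precisely the statement $D_4\neq0$; hence $f_1$ has two distinct roots $x_1,x_2$. Solving $\mu(\mu-(2n+1)\pi)=x_j$ yields for each $j$ a spurious small root of size $O(1/n)$ together with one genuine root $(2n+1)\pi+x_j/((2n+1)\pi)+O(1/n^2)$; inserting $x_{1,2}=(2i\beta_4\pm\sqrt{D_4})/(2i(1+\beta_3))$ and simplifying with $1/i=-i$ produces exactly the asserted formulas for $\rho^0_{n,1}$ and $\rho^0_{n,2}$. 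On circles of radius $r_n=o(1/n)$ about these roots one bounds $|f_1|\gtrsim nr_n$ from below through the factorization $f_1=A\prod_{i,j}(\mu-\mu_{ij})$ and the separation estimates analogous to (56), while the perturbation is $o(nr_n)$; Rouch\'e then places exactly one eigenvalue in each disk, and since the two genuine roots are separated by a distance $\sim1/n\gg r_n$ these eigenvalues are simple. This proves (a).

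Part (b) is identical to Theorem~1(b). Running the fundamental-solution computation of (36) with $\rho^0_{n,j}=(2n+1)\pi+O(1/n)$ shows that both normalized eigenfunctions satisfy $\phi^0_{n,j}(x)=\sqrt2\cos((2n+1)\pi x)+O(1/n)$, so the angle between the pair $\phi^0_{n,1},\phi^0_{n,2}$ tends to zero; by [20] the root system of $T_2^0(q)$ is therefore not a Riesz basis. The only genuinely load-bearing step is the discriminant identity: everything hinges on verifying that the discriminant of $f_1$ equals $D_4$, so that (14) for $\sigma=0$ is exactly the condition forcing $x_1\neq x_2$ and hence the $1/n$-separation of the eigenvalue pair. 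I expect the remaining care to be concentrated in the lower bound $|f_1|\gtrsim nr_n$ on the Rouch\'e circles, which (as in Theorem~2) depends on controlling the gaps between the four roots of $f_1$.
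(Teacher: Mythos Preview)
Your proposal is correct and follows essentially the same approach as the paper, which simply states that one repeats the proof of Theorem~2 using (63) in place of (46). You have faithfully carried out this substitution, correctly handling the Taylor expansions about $(2n+1)\pi$ (where $e^{\pm i(2n+1)\pi}=-1$), correctly reducing to the quadratic $i(1+\beta_3)x^2-2i\beta_4 x+d+o(1)=0$, and correctly verifying that its discriminant equals $D_4$ so that hypothesis (14) for $\sigma=0$ is exactly $D_4\neq0$; the Rouch\'e localization and part~(b) are likewise handled as in Theorems~2 and~1(b).
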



\begin{thebibliography}{99}                                                                                               %


\bibitem {}N. Dernek, O. A. Veliev, On the Riesz basisness of the root
functions of the nonself-adjoint Sturm-Liouville operators, Israel Journal of
Mathematics, 145 (2005) 113-123.

\bibitem {}P. Djakov,~B.~S. Mitjagin, Instability zones of periodic
1-dimensional Schrodinger and Dirac operators, Russian Math. Surveys, 61(4)
(2006) 663-776.

\bibitem {}P. Djakov,~B.~S. Mitjagin, Convergence of spectral decompositions
of Hill operators with trigonometric polynomial potentials, Doklady
Mathematics, 83(1) (2011) 5-7.

\bibitem {}P. Djakov,~B.~S. Mitjagin, Convergence of spectral decompositions
of Hill operators with trigonometric polynomial potentials, Math. Ann. 351(3)
(2011) 509--540.

\bibitem {}P. Djakov,~B.~S. Mitjagin, Criteria for existence of Riesz bases
consisting of root functions of Hill and 1D Dirac operators, Journal of
Functional Analysis, 263(8) (2012) 2300-2332.

\bibitem {}N. Dunford, J. T. Schwartz, Linear Operators, Part 3, Spectral
Operators, Wiley-Interscience, MR 90g:47001c, New York, 1988.

\bibitem {}F. Gesztesy and V. Tkachenko, A Schauder and Riesz Basis Criterion
for Non-Self-Adjoint Schr\"{o}dinger Operators with Periodic and Antiperiodic
Boundary Conditions, Journal of Differential Equations, 253 (2012) 400-437.

\bibitem {}N. B. Kerimov, Kh. R. Mamedov, On the Riesz basis property of the
root functions in certain regular boundary value problems, Math. Notes, 64(4)
(1998) 483-487.

\bibitem {}G. M. Kesselman, On unconditional convergence of the eigenfunction
expansions of some differential operators, Izv. Vuzov, Matematika, 2 (1964)
82-93 (In Russian).

\bibitem {}P. Lang, J. Locker, Spectral theory of two-point differential
operators determined by -D2, J. Math. Anal. Appl. 146 (1990) 148-191.

\bibitem {}A. S. Makin, Convergence of Expansion in the Root Functions of
Periodic Boundary Value Problems, Doklady Mathematics, 73(1) (2006) 71-76.

\bibitem {}A. S. Makin, On spectral decompositions corresponding to
non-self-adjoint Sturm--Liouville operators, Dokl. Math. 73(1) (2006) 15--18.

\bibitem {}A. S. Makin, On the basis property of systems of root functions of
regular boundary value problems for the Sturm--Liouville operator, Differ.
Equ. 42(12) (2006) 1717--1728.

\bibitem {}Kh.R. Mamedov, On the basis property in Lp(0; 1) of the root
functions of a class non self adjoint Sturm--Liouville operators, Eur. J. Pure
Appl. Math. 3(5) (2010) 831--838.

\bibitem {}Kh.R. Mamedov, H.Menken, On the basisness in L$_{2}$(0; 1) of the
root functions in not strongly regular boundary value problems, Eur. J. Pure
Appl. Math. 1(2) (2008) 51--60.

\bibitem {}H. Menken, Kh.R. Mamedov, Basis property in Lp(0; 1) of the root
functions corresponding to a boundary-value problem, J. Appl. Funct. Anal.
5(4) (2010) 351--356.

\bibitem {}V. P. Mikhailov, On Riesz bases in $L_{2}[0,1]$, Dokl. Akad. Nauk
USSR, 114(5) (1962) 981-984.

\bibitem {}M. A. Naimark, Linear Differential Operators, George G.
Harap\&Company, 1967.

\bibitem {}C. Nur, O.A. Veliev, On the Basis Property of the Root Functions of
Some Class of Non-self-adjoint Sturm-Liouville Operators, arXiv:1301.7043.

\bibitem {}A. A. Shkalikov, On the Riesz basis property of the root vectors of
ordinary differential operators, Russian Math. Surveys, 34(5) (1979) 249-250.

\bibitem {}A. A. Shkalikov, On the basis property of the eigenfunctions of
ordinary differential operators with integral boundary conditions, Vestnik
Moscow University, Ser. Mat. Mekh. 37(6) (1982) 12-21.

\bibitem {}A. A. Shkalikov, O. A. Veliev, On the Riesz basis property of the
eigen- and associated functions of periodic and antiperiodic Sturm-Liouville
problems, Math. Notes, 85(5) (2009) 647-660.

\bibitem {}O. A .Veliev, \ M. Toppamuk Duman, The spectral expansion for a
nonself-adjoint Hill operators with a locally integrable potential, Journal of
Math. Analysis and Appl. 265 (2002) 76-90.

\bibitem {}O. A. Veliev, On the Nonself-adjoint Ordinary Differential
Operators with Periodic Boundary Conditions. Israel Journal of Mathematics,
176 (2010) 195-208.

\bibitem {}O. A. Veliev, On the basis property of the root functions of
differential operators with matrix coefficients, Central European Journal of
Mathematics, 9(3) (2011) 657-672.
\end{thebibliography}
\end{document}